    \renewcommand{\phi}{\varphi}
    \renewcommand{\epsilon}{\varepsilon}
	\newcommand				{\eins}			{\mathbbm{1}}   
	\newcommand				{\abs}[1]		{\left\lvert#1\right\rvert}
	\DeclareMathOperator	{\IE}			{\mathbb{E}} 
	\DeclareMathOperator	{\IP}			{\mathbb{P}}
	\DeclareMathOperator	{\IR}			{\mathbb{R}}
	\DeclareMathOperator	{\supp}			{supp}
	\theoremstyle{plain}
\newtheorem{thm}			{Theorem}[section]
\newtheorem{lem}	[thm]	{Lemma}
\newtheorem{cor}	[thm]	{Corollary}
\newtheorem{prop}	[thm]	{Proposition}
\theoremstyle{definition}
\newtheorem{rem}	[thm]	{Remark}
\numberwithin{equation}{section}
\begin{document}

\title[Transportation of random measures not charging small sets]{Transportation of random measures not charging small sets}

%\author{Martin Huesmann \and Bastian Müller}

\author[M. Huesmann]{Martin Huesmann}
\address{M.H.: Universit\"at M\"unster, Germany}
\email{martin.huesmann@uni-muenster.de}
\author[B. Müller]{Bastian Müller}
\address{B.M.: Universit\"at M\"unster, Germany   }
\email{bastian.mueller@uni-muenster.de}

\thanks{MH and BM are funded by the Deutsche Forschungsgemeinschaft (DFG, German Research Foundation) under Germany's Excellence Strategy EXC 2044 -390685587, Mathematics M\"unster: Dynamics--Geometry--Structure and   by the DFG through the SPP 2265 {\it Random Geometric Systems. }}

\begin{abstract}
Let $(\xi,\eta)$ be a pair of jointly stationary, ergodic random measures of equal finite intensity. A balancing allocation is a translation-invariant (equivariant) map $T:\IR^d\to\IR^d$ such that the image measure of $\xi$ under $T$ is $\eta$. We show that as soon as $\xi$ does not charge small sets, i.e.\  does not give mass to $(d-1)$-rectifiable sets, there is always a balancing allocation $T$ which is measurably depending only on $(\xi,\eta)$, i.e. $T$ is a factor.
\end{abstract}

\date{\today}
\maketitle

\section{Introduction}

Let $\xi$ and $\eta$ be two random, jointly stationary, and ergodic measures with the same finite intensity. An allocation $T$ is a translation-invariant (equivariant) random mapping $T:\IR^d \to \IR^d$. It is said to balance $\xi$ and $\eta$, if the image measure of $\xi$ under $T$ is equal to $\eta$. In this article, we are interested in the question of existence of balancing allocations. Note that without the requirement of translation-invariance existence can be shown via Borel isomorphism theorems as soon as $\xi$ is diffuse, i.e.\ $\xi$ does not have atoms. However, the requirement of translation-invariance makes the question much harder.
Last and Thorisson showed recently the following existence result:

\begin{thm}[{\cite[Theorem 1.1]{LastThorisson}}]\label{thm:LT}
Let $\xi$ and $\eta$ be two random, jointly stationary, and ergodic measures with the same finite intensity. Let $\xi$ be diffuse.
Then there exists an allocation balancing $\xi$ and $\eta$ if one of the following conditions holds:

\begin{enumerate}[(a)]
\item $\eta$ has a non-zero discrete component;
\item $\eta$ is diffuse and there exists a non-zero simple point process $\chi$ on $\IR^d$ with finite intensity, such that the triple $(\xi,\eta,\chi)$ is jointly stationary and ergodic.
\end{enumerate}
\end{thm}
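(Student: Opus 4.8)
The plan is to use an auxiliary point process as a system of reference points, so as to break the global, infinite-mass transport problem into a locally finite family of transport problems between finite measures. In case (a) the reference points will be the atoms of $\eta$ of weight $>1/n$ for a fixed large $n$: the events $\{\eta\text{ has an atom of weight }>1/n\}$ are shift-invariant, hence have probability $0$ or $1$, and they increase to an event of full probability, so for $n$ large this thinned configuration is almost surely nonzero and is then a simple point process $\Psi$ of finite positive intensity (finiteness because $\eta$ has finite intensity, positivity by stationarity), a factor of $\eta$. In case (b) I take $\Psi=\chi$. In both cases write $(Z_i)$ for the points of $\Psi$ and $(C_i)$ for its Voronoi cells, made half-open so that boundary ties are broken and $(C_i)$ is a genuine translation-equivariant partition of $\IR^d$ into almost surely bounded sets, and form the discrete random measure $\zeta:=\sum_i\eta(C_i)\,\delta_{Z_i}$. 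Then $\zeta$ is a factor of $(\xi,\eta,\Psi)$, jointly stationary and ergodic, and, since the equivariant map $y\mapsto(\text{centre of the Voronoi cell of }y)$ transports $\eta$ to $\zeta$, the mass-transport principle gives $\zeta$ the intensity $\IE\bigl[\sum_{Z_i\in[0,1)^d}\eta(C_i)\bigr]=\IE[\eta([0,1)^d)]=\gamma_\eta=\gamma_\xi$.

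The first main step is a building block: \emph{if $\mu$ is a diffuse and $\nu$ a discrete random measure, jointly stationary, ergodic, and of the same finite intensity, then there is a translation-equivariant allocation balancing $\mu$ and $\nu$ that is a factor of $(\mu,\nu)$}. I would get this from a Gale--Shapley / stable-allocation construction in the spirit of Holroyd--Peres: let the atom of $\nu$ at $Z_i$, with appetite $\nu(\{Z_i\})$, grow a ball that claims $\mu$-mass until it is sated, and allocate each site to the first atom that claims it while still unsated (with the usual lexicographic tie-breaking); equality of intensities together with ergodicity should force, via the mass-transport principle, that $\mu$-almost every site is allocated and every atom is sated exactly. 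Applying this with $\mu=\xi$ and $\nu=\zeta$ yields a translation-equivariant partition $(D_i)$ of $\IR^d$, a factor of $(\xi,\eta,\Psi)$, with $\xi(D_i)=\eta(C_i)$ for every $i$.

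It then remains, for each $i$, to transport the finite diffuse measure $\xi|_{D_i}$ of total mass $\eta(C_i)$ onto $\eta|_{C_i}$, a finite measure of the same mass; here stationarity plays no role and one needs only a classical fact, made measurable and translation-covariant. To a finite diffuse $\mu$ and a finite $\nu$ with $\mu(\IR^d)=\nu(\IR^d)=M$ one can assign the Borel map $G_\nu^{-1}\circ G_\mu$, where $G_\rho(x):=\rho(\{y:y\prec_{\mathrm{lex}}x\})$ is the lexicographic distribution function: for diffuse $\mu$ the map $G_\mu$ sends $\mu$ to Lebesgue measure on $[0,M]$ and $G_\nu^{-1}$ sends that to $\nu$, while $G_{\tau_a\rho}(\cdot)=G_\rho(\cdot+a)$ makes the assignment covariant. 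Taking $T$ on each $D_i$ to be this map applied to $(\xi|_{D_i},\eta|_{C_i})$, and the identity off the support of $\xi$, produces a single Borel map $T:\IR^d\to\IR^d$ --- a factor of $(\xi,\eta)$ in case (a), of $(\xi,\eta,\chi)$ in case (b) --- which is translation-equivariant by covariance of all its ingredients and satisfies $T_*\xi=\sum_iT_*(\xi|_{D_i})=\sum_i\eta|_{C_i}=\eta$.

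The hard part will be the building block, i.e.\ balancing a \emph{merely} diffuse $\mu$ --- not assumed to avoid small sets --- onto a discrete measure of equal intensity by a factor allocation. The Holroyd--Peres construction is tailored to $\mu$ equal to Lebesgue measure, and the genuine difficulty is that a growing ball may become sated exactly on a sphere carrying positive $\mu$-mass: exactly the kind of small-set behaviour that this paper's main theorem excludes by hypothesis. I would treat the surplus on such a sphere by a secondary, locally finite splitting rule --- sweep the sphere by solid angle from a direction canonically attached to $Z_i$ and recurse on lower-dimensional great subspheres, which terminates because $\mu$ has no atoms --- chosen so as to keep the allocation a factor; verifying that this preserves sated-ness and equivariance, together with the bookkeeping it entails, is the main thing to get right. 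The other ingredients --- the mass-transport computations, the ergodicity arguments, and the measurable covariant selection of transport maps between finite measures --- I expect to be routine.
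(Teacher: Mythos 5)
This theorem is quoted from Last and Thorisson and is not proved in the paper, so there is no in-paper argument to compare against; I assess your proposal on its own terms.

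Your outer architecture --- reference points $\Psi$, Voronoi cells $C_i$ for $\eta$, the aggregated discrete measure $\zeta=\sum_i\eta(C_i)\delta_{Z_i}$, a stable-allocation partition $(D_i)$ with $\xi(D_i)=\eta(C_i)$, and a cell-by-cell transport $\xi|_{D_i}\to\eta|_{C_i}$ --- is reasonable, and you are right to flag the stable-allocation step as the hard one (the sphere-surplus phenomenon is exactly the small-set pathology this paper's main theorem rules out by hypothesis). But the step you dismiss as routine contains a genuine error. You claim the lexicographic quantile $G_\nu^{-1}$ pushes Lebesgue measure on $[0,M]$ onto a general finite $\nu$ on $\IR^d$. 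This fails whenever $d\ge 2$. Take $\nu=\mathsf{Leb}|_{[0,1]^2}$: then $G_\nu(x_1,x_2)=x_1$ on $[0,1]^2$, and each level set $G_\nu^{-1}(\{t\})$ is a vertical line of $\nu$-measure zero. Any function-valued right inverse $H$ of $G_\nu$ therefore has its image contained in a one-dimensional set, so $H_*\mathsf{Leb}_{[0,M]}\ne\nu$; more generally, there is no lexicographically monotone map $[0,M]\to\IR^d$ whose image carries positive $d$-dimensional Lebesgue measure, so no quantile function exists. The lexicographic CDF can flatten a diffuse $\mu$ on $\IR^d$ down to $\mathsf{Leb}_{[0,M]}$, but cannot be inverted to lift $\mathsf{Leb}_{[0,M]}$ to a generic target on $\IR^d$ --- and in case (b) the target $\eta|_{C_i}$ is diffuse, i.e.\ precisely in the problematic regime.

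The step can be repaired, but not with the lex-CDF. For instance, fix once and for all a Borel bijection $\psi:\IR^d\to\IR$; set $\tilde\mu=\psi_\#\mu$ and $\tilde\nu=\psi_\#\nu$ (note $\tilde\mu$ is still diffuse because $\psi$ is injective), take the one-dimensional quantile composition $F_{\tilde\nu}^{-1}\circ F_{\tilde\mu}$, and conjugate back by $\psi$. This is jointly Borel in $(\mu,\nu)$, and translation equivariance (which $\psi$ alone destroys) is recovered by recentering the pair $(\xi|_{D_i},\eta|_{C_i})$ at the nucleus $Z_i$ before applying the fixed rule and shifting back. Even with this fix, the proof is not complete until the stable-allocation building block --- factor-balancing a merely diffuse $\xi$ against the discrete $\zeta$, including the tie-breaking on sated spheres carrying positive $\xi$-mass --- is actually carried out, and that is where the real work of Last and Thorisson lies.
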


The most interesting applications (see below) of part (b) are in the case when the process $\chi$ is derived as a factor of $(\xi,\eta)$, i.e.\ if it is measurably dependent on $(\xi,\eta)$.  Without the requirement of being a factor such a process $\chi$ can be constructed by an extension of the probability space. This raises the question of either characterizing pairs of random measures $(\xi,\eta)$ admitting a point process factor $\chi$ or deriving complementary conditions ensuring existence of balancing factor allocations, e.g.\ see \cite{Haji_Mirsadeghi_2016, LastThorisson}.
In this article, we concentrate on the latter question and derive conditions only on $\xi$ ensuring the existence of allocations. By the example of  \cite[Section 8]{LastThorisson}, we know that for such a general existence result $\xi$ should not give mass to $d-1$-dimensional sets.
Indeed,  Last and Thorisson constructed a pair of jointly stationary, ergodic, diffuse random measures $(\xi,\eta)$, where $\xi$ is concentrated on a $d-1$ dimensional set, such that there is no balancing allocation.  Our main result gives a general existence result for factor allocations:

\begin{thm}\label{thm:main}
Let $\xi$ and $\eta$ be two random, jointly stationary, and ergodic measures with the same finite intensity. Assume that $\xi$ does not charge small sets, i.e. does not give mass to $(d-1)$-rectifiable sets. Then there exists a factor allocation balancing $\xi$ and $\eta$.
\end{thm}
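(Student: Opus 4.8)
The natural strategy is to reduce the problem to the setting where Last–Thorisson's Theorem 1.1 applies, by manufacturing—as a factor of $(\xi,\eta)$—an auxiliary simple point process $\chi$ that renders the triple jointly stationary and ergodic. The key insight is that the hypothesis "$\xi$ does not charge $(d-1)$-rectifiable sets" is precisely a diffuseness condition robust enough to let us cut space into pieces of prescribed $\xi$-mass using only information contained in $\xi$ itself. So the first step is to establish a "translation-invariant partitioning" lemma: given that $\xi$ does not charge small sets, one can produce, equivariantly and measurably from $\xi$, a random locally finite collection of points (or cell centers) — for instance, by an iterated bisection or a stabilizing greedy procedure — whose intensity is finite and which therefore defines the desired $\chi$. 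The non-charging hypothesis is what guarantees that the bisection cuts (which will be supported on hyperplane-like sets) carry no $\xi$-mass, so the construction is unambiguous and the resulting $\chi$ is genuinely a factor of $\xi$ alone.

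Concretely, I would first handle the case where $\eta$ has a non-zero discrete component by invoking Theorem 1.1(a) directly — nothing to do there. In the remaining case $\eta$ is diffuse, and I must build the factor point process $\chi$. The plan is: (i) normalize so the common intensity is $1$; (ii) using a deterministic equivariant scheme driven by $\xi$ — e.g. recursively bisect $\IR^d$ along coordinate directions, each time choosing the cut position to split the current cell's $\xi$-mass in half (well-defined a.s. because $\xi$ assigns zero mass to every hyperplane, this being a $(d-1)$-rectifiable set) — produce a nested family of cells whose $\xi$-masses are dyadic; (iii) stop each branch when the cell's $\xi$-mass first drops below $1$, obtaining a translation-covariant tessellation of $\IR^d$ into cells of $\xi$-mass in $[1/2,1)$ — actually one should be slightly careful and instead select cells with mass in a fixed bounded interval bounded away from $0$; (iv) let $\chi$ be the point process of cell "centroids" (say the $\xi$-barycentre of each cell, or a canonical interior point), which is simple, has finite intensity (comparable to the intensity of $\xi$, hence to $1$), and is by construction a factor of $\xi$, hence $(\xi,\eta,\chi)$ is jointly stationary and ergodic.

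With $\chi$ in hand, Theorem 1.1(b) yields an allocation $T$ balancing $\xi$ and $\eta$; and since $\chi$ is a factor of $(\xi,\eta)$ and the allocation produced by Last–Thorisson depends measurably on $(\xi,\eta,\chi)$, the composition is a factor of $(\xi,\eta)$ — which is exactly the assertion. One should double-check that the ergodicity of $(\xi,\eta,\chi)$ follows automatically: since $\chi=f(\xi)$ for an equivariant measurable $f$, the $\sigma$-algebra generated by $(\xi,\eta,\chi)$ coincides with that generated by $(\xi,\eta)$, so the invariant events are the same and ergodicity is inherited.

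The main obstacle is step (ii)–(iii): making the equivariant partitioning construction genuinely translation-covariant and a.s.\ well-defined. A naive bisection needs an origin, which breaks equivariance; the standard fix is to define the tessellation intrinsically — e.g.\ via a stabilizing rule where a cell is determined by the $\xi$-mass configuration in a bounded region around it, or by first extracting an equivariant point process of "seeds" from $\xi$ (possible because $\xi$ is non-trivial and diffuse) and then running the bisection relative to those seeds. Verifying that such a scheme (a) terminates a.s.\ on every branch, (b) produces cells of mass bounded away from $0$ and $\infty$, and (c) has all cut-sets of $\xi$-measure zero — the last point being exactly where "not charging $(d-1)$-rectifiable sets" enters, since the cut-sets in a recursive coordinate bisection are finite unions of pieces of hyperplanes — is the technical heart of the argument. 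Everything downstream is a clean application of Theorem 1.1.
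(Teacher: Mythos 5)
Your plan takes a route genuinely different from the paper's: you want to manufacture a simple point process $\chi$ as a factor of $(\xi,\eta)$ and then invoke Last--Thorisson's Theorem~\ref{thm:LT}(b), whereas the paper constructs the allocation directly via optimal transport (the semicoupling machinery of \cite{H16} combined with a de~la~Vall\'ee~Poussin argument that produces a concave cost of finite mean transportation cost). The gap sits exactly where you flag ``the technical heart'': the construction of $\chi$. The recursive bisection you sketch requires an origin to start from, and the fixes you offer are either circular (extracting an equivariant point process of ``seeds'' from $\xi$ \emph{is} the problem you are trying to solve) or unsubstantiated (a ``stabilizing rule'' determined by local $\xi$-mass configurations --- which rule, and why does it stabilize and produce a well-defined equivariant tessellation?). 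The hypothesis that $\xi$ does not charge $(d-1)$-rectifiable sets tells you that coordinate-hyperplane cuts carry no mass once chosen, but gives you no equivariant mechanism to \emph{choose} them. The extreme case $\xi=\mathsf{Leb}$ makes the obstruction explicit: any factor of $\xi$ alone is a deterministic translation-invariant object, so there is no nonempty finite-intensity simple point process that is a factor of $\xi$ --- contradicting your claim that ``the resulting $\chi$ is genuinely a factor of $\xi$ alone.'' (You would then need to lean on $\eta$, which you never specify how to do.) The paper's introduction explicitly names ``characterizing pairs $(\xi,\eta)$ admitting a point process factor $\chi$'' as one of two complementary directions and deliberately pursues the other; you have rediscovered the dichotomy, but chosen precisely the branch the paper sidesteps because it is not known how to carry it out in this generality.

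The symmetry-breaking you are missing is what the optimal transport step provides. The paper first constructs, via Theorem~\ref{thm:semicoupling} and Corollary~\ref{cor:mutually_singular}, a factor allocation $T$ between the mutually singular parts $(\xi-\eta)_+$ and $(\eta-\xi)_+$. The displacement label $G(T(x)-x)$ is then an equivariant, $\sigma(\xi,\eta)$-measurable scalar attached to each point of the support, and an intermediate-value argument over the threshold $t$ yields an equivariant partition of $\IR^d$ into pieces of prescribed intensity --- an equivariant partition obtained without ever producing a point process. In the general case this is combined with the Lebesgue decomposition $\eta=\eta^a+\eta^s$. Your reduction to Theorem~\ref{thm:LT} would indeed finish the proof if you had $\chi$ in hand, but producing $\chi$ requires an idea your proposal does not supply.
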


We note  that the assumption on $\xi$ is sharp  by the counterexample of Last and Thorisson.  We also remark that Theorem \ref{thm:main} (just as Theorem \ref{thm:LT}) remains true if one relaxes the assumption of ergodicity and same intensity to the assumption that $\IE[\xi([0,1])| \mathcal I]=\IE[\eta([0,1])| \mathcal I]$, where $\mathcal I$ denotes the $\sigma$-algebra of shift invariant events (e.g. see \cite[Section 9]{LastThorisson}).

The proof of Theorem \ref{thm:main} is based on the optimal transport techniques for random measures, introduced in \cite{HS13} and \cite{H16}. Let us sketch the argument. By  \cite[Theorem 5.1]{Last09}, under our assumptions there is some equivariant coupling $q$ for $\xi$ and $\eta$. By an application of the Lemma of de la Vall\'ee Poussin, we can construct a concave function $\vartheta$, such that $q$ has finite mean transportation cost (cf.\ \eqref{eq:meancost}) w.r.t. $c(x,y)=\vartheta(|x-y|)$. If $\xi$ and $\eta$ are mutually singular, \cite[Theorem 1.1]{H16} implies the existence of an equivariant coupling $q^*=(id,T)_\#\xi$. In particular, $T$ is the desired balancing factor allocation.

For the general case, we first construct an auxiliary factor allocation $T$ between the mutually singular measures $(\xi-\eta)_+$ and $(\eta-\xi)_+$. This can be used to partition $\IR^d$ in an equivariant way into sets $\{x:G(T(x)-x)\leq t\}$ and $\{x:G(T(x)-x)> t\}$, where $G:\IR^d\to \IR^d$ is some fixed deterministic map. It turns out that for a particular choice of $t=t_0$ the measures $\eins_{G(T(x)-x)\leq t_0}\xi$ and $ \eins_{G(T(x)-x)> t_0}\eta$ have the same intensity (and are mutually singular). Hence, there exists a balancing factor allocation $T_1$. Similarly, we obtain a balancing factor allocation $T_2$ between $\eins_{G(T(x)-x)> t_0}\xi$ and $ \eins_{G(T(x)-x)\leq t_0}\eta$. Combining $T_1$ and $T_2$ proves Theorem \ref{thm:main}.

The interest in allocations orginates from its link to shift couplings of random measures with their Palm version. If the source $\xi$ is the  Lebesgue measure on $\IR^d$ and  $T$ an allocation balancing $\mathsf{Leb}$ and $\eta$,  then the shifted measure $\eta -T(0)$ is the Palm version of $\eta$, i.e.\ the pair $(\eta,\eta-T(0))$ is a shift coupling of $\eta$ and its Palm version (see \cite{Holroyd_2005}). In particular, if $T$ is a factor allocation, the Palm version of $\eta$ is a function of $\eta$. To the best of our knowledge, the first explicit non-randomized (factor) shift-coupling for point processes was constructed by Liggett \cite{Liggett2002}. This work together with \cite{Holroyd_2005,Hoffman_2006} initiated a series of constructions of shift couplings by constructing factor allocations, e.g.\  
\cite{Holroyd_2005, Chatterjee, LaMoTh14, HS13}. Allocations and equivariant couplings or transports between two general random measures $\xi$ and $\eta$ have been investigated e.g.\ in 
\cite{Last09, LaMoTh14, Last_2018, H16}.
 We also refer to \cite{AlTh93, Thorisson} for the origin of shift-couplings and to \cite{Last09} and \cite[Remark 2.2]{LastThorisson} for results on shift couplings resulting from allocations between general random measures.
 
\begin{rem}
As  a particular consequence of the preceding paragraphs and Theorem \ref{thm:main}, for any $\eta$ there is always a (factor) shift coupling of $\eta$ with its Palm version (by \cite{Thorisson} we only know that there is some shift coupling on a potentially enlarged probability space).
\end{rem}

\section{Setup and Preliminaries}\label{sec:setup}
Let $(\Omega,\mathcal{F},\IP)$ be a probability space  equipped with a measurable flow $\theta_x:\Omega\to \Omega$, $x\in \IR^d$. That is, the mapping $(x,\omega)\mapsto \theta_x\omega$ is measurable, $\theta_y\circ \theta_x=\theta_{x+y}$ for all $x,y\in \IR^d$ and $\theta_0$ is the identity.
Furthermore, let  $\IP$  be stationary  w.r.t. the flow $\theta$, i.e. $\IP(A)=\IP(\theta_x(A))$ for all $x\in \IR^d$.
The invariant sigma field $\mathcal{I}$ is defined by $\mathcal{I}=\{A\in \mathcal{F}\mid \forall x\in \IR^d: A=\theta_xA \}$ and we assume that $\IP$ is ergodic, that is $\IP(A)\in \{0,1\}$ for all $A\in \mathcal{I}$.

In the following a random measures $\xi$ is a locally finite transition kernel from $(\Omega,\mathcal{F})$ to $(\IR^d,\mathcal{B}(\IR^d))$, where locally finite means, that for  $\IP$-a.e. $\omega \in \Omega$  the measure $\xi(\omega,\cdot)$ is finite on bounded measurable sets.
A random measure $\xi$ is said to be equivariant if for all $\omega \in \Omega$, $x\in \IR^d$ and $B\in \mathcal{B}(\IR^d)$ it holds that \[
\xi(\omega,B)=\xi(\theta_x\omega,B-x).
\]

An allocation is a measurable mapping $T:\Omega\times \IR^d\to \IR^d$ with the following equivariance property \[
T(\theta_x\omega,y)=T(\omega,y+x)-x \quad \forall \omega \in \Omega, x,y\in \IR^d.
\]
The allocation $T$ balances two random measures $\xi$ and $\eta$ if for $\IP$-a.e. $\omega\in \Omega$ the map $T^{\omega}$ pushes $\xi^{\omega}$ onto $\eta^{\omega}$, i.e. $\xi^{\omega}\circ (T^{\omega})^{-1}=\eta^{\omega}$. We say that $T$ is a factor allocation, if $T$ is measurable w.r.t. to $\sigma(\xi,\eta)$, the sigma algebra generated by $\xi$ and $\eta$.

A  semicoupling $q$ of $\xi$ and $\eta$ is a  transition kernel  from $(\Omega,\mathcal{F})$ to $(\IR^d\times \IR^d,\mathcal{B}(\IR^d\times \IR^d))$ such that for every $\omega
\in \Omega$ the measure $q(\omega)$ is a semicoupling of $\xi(\omega)$ and $\eta(\omega)$, that is \begin{align}\label{eq:semicoupl}
(\pi_1)_{\#}(q(\omega))\leq \xi(\omega) \text{ and }(\pi_2)_{\#}(q(\omega))=\eta(\omega).
\end{align}
Here $\pi_i$ dentotes the projection onto the $i$-th coordinate. A semicoupling $q$ is said to be equivariant if \[
q(\omega,A\times B)=q(\theta_x\omega,(A-x)\times (B-x))\quad \forall \omega\in\Omega, x\in \IR^d, A,B\in \mathcal{B}(\IR^d).
\]
For equivariant random measures $\xi$ and $\eta$ we denote by $\mathsf{Cpl}_{es}(\xi,\eta)$ the set of all equivariant semicouplings of $\xi$ and $\eta$. 
 For a given function $\vartheta:[0,\infty)\to [0,\infty)$ we then define the mean transportation cost by 
 \begin{equation}\label{eq:meancost}
\inf_{q\in \mathsf{Cpl}_{es}(\xi,\eta)}\IE\left[\int_{\Lambda_1\times \IR^d}\vartheta(\abs{x-y})q(dx,dy)\right].
\end{equation}

Optimal transport problems for semicoupling between finite measures have been also investigated under the name of partial optimal transport problem, e.g.\ \cite{Fi10}, or incomplete optimal transportation \cite{AEBaCAMa11}. We will establish a particular  uniqueness result for concave cost for a partial optimal transport between finite measures in Lemma \ref{lem:unique_semicp} below. It is an important ingredient for the proof of the following  theorem:

\begin{thm}[Semicoupling]\label{thm:semicoupling}
Let $\xi$ and $\eta$ be two equivariant random measures, which are a.s. mutually singular.
 Furthermore, assume that a.s. $\xi$ does not charge small sets, i.e. $\xi$ does not give mass to $(d-1)$-rectifiable sets and that the intensity of $\xi$ is greater than or equal to the intensity of $\eta$. Let $\vartheta$ be a strictly increasing, concave function $\vartheta:[0,\infty)\to[0,\infty)$ with $\lim_{x\to \infty}\vartheta(x)=\infty$ and $\vartheta(0)=0$. Assume that the mean transportation cost of $\xi$ and $\eta$ w.r.t.  $\vartheta$ is finite. Then there exists an equivariant semicoupling $q$ of $\xi$ and $\eta$, which can be represented as $q=(Id,T)_{\#}(f\cdot\xi)$, for some allocation $T:\supp(\xi)\to \supp(\eta)$ and $f:\IR^d\to [0,\infty)$, measurably only dependent on $\sigma(\xi,\eta)$.
 
 Moreover, if $\xi$ and $\eta$ have equal intensities, then the equivariant semicoupling is in fact a coupling.
\end{thm}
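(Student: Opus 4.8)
The plan is to establish Theorem \ref{thm:semicoupling} by combining the abstract existence result \cite[Theorem 1.1]{H16} for optimal equivariant semicouplings with a Monge-type uniqueness argument for concave costs. First I would invoke \cite[Theorem 1.1]{H16}: since $\xi$ and $\eta$ are equivariant random measures with $\xi$ of intensity at least that of $\eta$, and the mean transportation cost \eqref{eq:meancost} with respect to $\vartheta(|x-y|)$ is finite, there exists an \emph{optimal} equivariant semicoupling $q^\ast$ minimizing \eqref{eq:meancost}. Because the minimization is over $\Cpl_{es}(\xi,\eta)$, a set defined intrinsically from $(\xi,\eta)$, and optimality pins down $q^\ast$ essentially uniquely (this is the content of the uniqueness statement I discuss next), $q^\ast$ will automatically be a factor, i.e.\ measurable with respect to $\sigma(\xi,\eta)$.

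Second, and this is the crux, I would prove that the fiberwise optimal semicoupling is induced by a map. For $\IP$-a.e.\ $\omega$, the measure $q^\ast(\omega)$ is an optimal partial-transport plan between the finite measures $\xi^\omega$ restricted to a fundamental domain (or a localized version thereof) and the corresponding piece of $\eta^\omega$, for the strictly concave cost $c(x,y)=\vartheta(|x-y|)$. The classical Gangbo--McCann / McCann theory of optimal transport for strictly concave costs of the distance says that, provided the source measure does not charge $(d-1)$-rectifiable sets (the standard hypothesis guaranteeing differentiability a.e.\ and ruling out mass on the ``non-smooth'' locus of the $c$-transform), the optimal plan is unique and concentrated on the graph of a map $T^\omega$. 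I would localize this to our infinite-measure setting: restricting to bounded regions, the relevant marginals are finite, $\xi^\omega$ inherits the no-small-sets property, so on each such region the optimal semicoupling is a graph over a subset of $\supp(\xi^\omega)$; patching these gives $q^\ast(\omega)=(\mathrm{Id},T^\omega)_\#(f^\omega\cdot \xi^\omega)$ where $f^\omega=\tfrac{d(\pi_1)_\#q^\ast(\omega)}{d\xi^\omega}\in[0,1]$ is the density of the consumed part of $\xi^\omega$. This is exactly Lemma \ref{lem:unique_semicp} as flagged in the text. Equivariance of $T$ and $f$ follows from equivariance of $q^\ast$ together with the a.s.\ uniqueness: the graph structure is preserved under the flow, so $T(\theta_x\omega,y)=T(\omega,y+x)-x$ and $f(\theta_x\omega,y)=f(\omega,y+x)$; measurability with respect to $\sigma(\xi,\eta)$ is inherited from $q^\ast$.

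Third, for the ``moreover'' clause: if $\xi$ and $\eta$ have equal (finite) intensities, I would argue that the semicoupling is forced to be a coupling, i.e.\ $(\pi_1)_\#q^\ast(\omega)=\xi^\omega$ for a.e.\ $\omega$, equivalently $f=1$ $\xi$-a.e. By the mass-transport principle / Campbell-type computation, the expected mass of $\xi$ consumed per unit volume equals $\IE[(\pi_1)_\#q^\ast([0,1]^d)]\le \IE[\xi([0,1]^d)]$, while the mass delivered per unit volume equals $\IE[\eta([0,1]^d)]$; since $q^\ast$ is a semicoupling these are equal, so $\IE[(\pi_1)_\#q^\ast([0,1]^d)] = \IE[\eta([0,1]^d)] = \IE[\xi([0,1]^d)]$. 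Combined with $(\pi_1)_\#q^\ast(\omega)\le\xi^\omega$ pointwise, this forces equality a.s., hence $f=1$ and $q^\ast=(\mathrm{Id},T)_\#\xi$ is an equivariant coupling.

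The main obstacle is the second step: transferring the finite-measure Monge solvability (graph structure / uniqueness for strictly concave costs under the no-$(d-1)$-rectifiable-mass condition) to the equivariant, infinite-mass partial transport, and doing so in a way that preserves measurability and equivariance. One must be careful that the localization to bounded regions is compatible — optimality is a global (mean-cost) notion, so one needs a cyclical-monotonicity or $c$-monotonicity argument to deduce that the restriction of a globally optimal equivariant semicoupling to a bounded box is still optimal for the induced finite marginals, and then apply the concave-cost regularity theory there. This is precisely what Lemma \ref{lem:unique_semicp} is set up to handle, and I expect its proof to be where the real work lies; the equivariance and factor properties then follow formally from uniqueness, and the coupling statement from the intensity bookkeeping above.
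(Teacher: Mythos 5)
Your proposal follows essentially the same architecture as the paper's sketch: existence of an optimal equivariant semicoupling from \cite{H16}, localization to bounded boxes to reduce to the finite-measure partial transport problem, uniqueness and graph structure from Lemma \ref{lem:unique_semicp}, and the intensity/mass-transport bookkeeping for the coupling clause. The treatment of the ``moreover'' statement via the mass-transport principle is correct, and you rightly identify that the localization step needs a cyclical-monotonicity argument (this is the ``local optimality'' machinery of \cite{HS13} that the paper invokes).

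There is, however, one genuine gap. You write the effective source density as $f^\omega \in [0,1]$, and then assert that ``optimality pins down $q^\ast$ essentially uniquely''. These two claims do not fit together. Having a Monge map from a fraction $f\cdot\xi$ to $\eta$ does \emph{not} by itself give uniqueness of the optimal equivariant semicoupling over $\Cpl_{es}(\xi,\eta)$: two optimizers could, a priori, consume different parts of $\xi$. The crucial extra content of Lemma \ref{lem:unique_semicp} --- which you slightly misread --- is that $f$ is forced to be an \emph{indicator} $\eins_B$, not merely a $[0,1]$-valued density. This is precisely what makes the averaging argument work: if $q_1,q_2$ are two optimizers with densities $\eins_{B_1},\eins_{B_2}$, then $\tfrac12(q_1+q_2)$ is also optimal with density $\tfrac12(\eins_{B_1}+\eins_{B_2})$, which can only be an indicator if $B_1=B_2$ $\mu$-a.s. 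Without the 0--1 structure this step fails, and then you have no uniqueness, hence no route to the factor measurability and equivariance of $(T,f)$. Also note that this 0--1 property is \emph{not} part of the classical Gangbo--McCann concave-cost theory (which handles couplings, not semicouplings); it requires the new competitor construction in the proof of Lemma \ref{lem:unique_semicp}, exploiting mutual singularity of $\mu$ and $\nu$ through a density-point argument. Your sketch treats this as standard regularity theory, whereas it is the novel ingredient.
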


Theorem \ref{thm:semicoupling} can be proven exactly as \cite[Theorem 1.1]{H16} once we have established the following uniqueness result for semicouplings, where we denote by $\mathsf{Cpl}_s(\mu,\nu)$ the set of semicouplings between $\mu$ and $\nu$.

\begin{lem}\label{lem:unique_semicp}
Let $\mu,\nu$ be two finite Borel measures on $\IR^d$ such that $\mu(\IR^d)\ge \nu(\IR^d)$, $\mu$ does not charge small sets and $\mu$ and $\nu$ are mutually singular. Let $c(x,y)=\vartheta(|x-y|)$ for some concave strictly increasing function $\vartheta:\IR_+\to\IR_+$. Then there is a unique optimizer $q*$ to 
\begin{align}\label{eq:opt}
\inf_{q \in \mathsf{Cpl}_{s}(\mu,\nu)} \int c(x,y) dq(x,y) .
\end{align}
Moreover, $q*=(id,T)(1_{B}\mu)$ for some measurable set $B$ and a map $T$. 
\end{lem}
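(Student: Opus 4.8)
The plan is to reduce the problem to a classical optimal transport statement between measures of equal mass, where the monotone/concave machinery of Gangbo--McCann and the uniqueness theory for strictly concave cost already apply. First I would observe that any semicoupling $q\in\mathsf{Cpl}_s(\mu,\nu)$ has total mass exactly $\nu(\IR^d)$, and its first marginal $\mu_1:=(\pi_1)_\#q$ is a sub-probability-type measure dominated by $\mu$. So the minimization \eqref{eq:opt} can be carried out in two stages: first fix a candidate first marginal $\rho\le\mu$ with $\rho(\IR^d)=\nu(\IR^d)$, then minimize the usual transport cost $\int c\,dq$ over genuine couplings of $\rho$ and $\nu$; finally optimize over the choice of $\rho$. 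The inner problem is a standard Monge--Kantorovich problem with cost $c(x,y)=\vartheta(|x-y|)$, $\vartheta$ strictly concave and strictly increasing, so for each admissible $\rho$ there is a value $I(\rho)$, and by a compactness argument (the set of semicouplings is weakly compact since $\mu,\nu$ are finite, and $c$ is lower semicontinuous) an optimizer $q^*$ of \eqref{eq:opt} exists.

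The heart of the argument is uniqueness together with the Monge (graph) structure. Here I would invoke the fact that for strictly concave radial cost the optimal coupling between two measures, the source of which does not charge $(d-1)$-rectifiable sets (equivalently, does not charge small sets in the sense used here), is unique and is induced by a map: this is the Gangbo--McCann/McCann-type result for concave costs, where the key point is that strict concavity of $\vartheta$ forces the Kantorovich potentials to be differentiable $\mu$-a.e.\ on the support of the first marginal — precisely because $\mu$ gives no mass to the set of non-differentiability points, which is $(d-1)$-rectifiable. I would first argue that the optimal first marginal $\rho^*=1_B\mu$ is itself of the form claimed: the optimality of $q^*$ forces $\rho^*$ to be supported on a set $B$ determined by a threshold condition on a Kantorovich-type potential (one transports the "cheapest" unit of $\mu$-mass), and since $\mu$ does not charge small sets the boundary $\{$potential $=$ threshold$\}$, being $(d-1)$-rectifiable or $\mu$-null by a coarea/level-set argument, carries no $\mu$-mass, so $B$ is determined up to a $\mu$-null set. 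Then, conditionally on $\rho^*=1_B\mu$, the inner transport problem between $1_B\mu$ and $\nu$ has a unique solution given by a map $T$, again because $1_B\mu$ inherits the property of not charging small sets. Splicing these together gives $q^*=(id,T)_\#(1_B\mu)$ and its uniqueness.

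To organize the proof cleanly I would: (1) set up the two-stage decomposition and existence of $q^*$; (2) record the dual/Kantorovich formulation for the inner problem and the fact that $\mu$ not charging small sets makes the relevant potentials differentiable $\mu$-a.e.\ (citing the concave-cost theory, e.g.\ \cite{AEBaCAMa11, Fi10} and the classical \cite{HS13} approach); (3) deduce that the optimal first marginal is $1_B\mu$ by a complementary-slackness/threshold argument and show $B$ is $\mu$-essentially unique using that the level set of the potential is small; (4) conclude the Monge structure and uniqueness of $T$ on $1_B\mu$. The main obstacle I expect is step (3): carefully justifying that the optimal partition of $\mu$ into "transported" and "left over" mass is governed by a single scalar threshold on a potential, and that the indeterminacy set of that threshold is $\mu$-null — this is exactly where the hypothesis that $\mu$ does not charge $(d-1)$-rectifiable sets is used in an essential way, mirroring the role it plays in \cite[Theorem 1.1]{H16}, and it is the only place where mere concavity (as opposed to strict convexity) of the cost makes the level-set geometry delicate.
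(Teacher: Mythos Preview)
Your two–stage decomposition and the existence/compactness part are fine, and the inner Monge step (given a fixed first marginal that does not charge small sets, the optimal plan to $\nu$ is induced by a map) is exactly what the paper also uses, quoting the concave–cost theory. But the paper does \emph{not} obtain the indicator form of the first marginal, nor uniqueness, via potentials and a threshold argument; it proceeds quite differently, and your step (3) as written has a real gap.

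The paper's route is a direct competitor construction. Given any optimizer $q^*=(id,T)_\#(f\mu)$, one assumes for contradiction that $\mu(\{0<f<1\})>0$, restricts to a set where $f\le 1-\varepsilon$, and then exploits the \emph{mutual singularity} of $\mu$ and $\nu$ to find a point $y\in\supp(\nu)$ with $\mu(B_r(y))/\nu(B_r(y))\to 0$. Around a suitable source point $x_1$ one sets up two opposite cones $C_\pm$ (using the density lemma for measures not charging small sets) and reroutes a small fraction of the mass transported from the far cone $C_-$ into the near cone $C_+$, which is admissible precisely because $f\le 1-\varepsilon$ leaves slack. Strict monotonicity of $\vartheta$ makes the new plan strictly cheaper, contradicting optimality. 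Once every optimizer has first marginal $1_B\mu$, uniqueness is a one–line convexity argument: if $q_1,q_2$ are optimal with densities $1_{B_1},1_{B_2}$, then $\tfrac12(q_1+q_2)$ is optimal with density $\tfrac12(1_{B_1}+1_{B_2})$, which can be an indicator only if $B_1=B_2$ $\mu$-a.e.

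By contrast, your step (3) asserts that $\rho^*=1_B\mu$ because ``the active set is a sublevel set of a Kantorovich potential and the level set is $(d-1)$-rectifiable''. Two problems. First, the complementary slackness for the semicoupling constraint only yields that the dual multiplier $\phi$ vanishes where the density $f$ is strictly below $1$; it does \emph{not} by itself forbid $0<f<1$ on the whole set $\{\phi=0\}$, so the indicator structure does not drop out of duality without an additional argument you have not supplied. Second, even granting a threshold description, for concave radial costs the $c$-concave potential is an infimum of functions $y\mapsto \psi(y)-\vartheta(|x-y|)$ and there is no general reason its level sets are $(d-1)$-rectifiable; a coarea/level-set argument would require regularity you have not established, and the references you cite (\cite{Fi10}, \cite{AEBaCAMa11}) treat quadratic or convex costs, where the analysis is quite different. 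Finally, note that in the paper mutual singularity is used not only to get a transport map for the inner problem but, crucially, to locate the ``cheap'' target point around which the competitor is built; your outline does not indicate any comparable use of this hypothesis. So as it stands, step (3) is the missing idea, and the paper's geometric rerouting argument is what fills it.
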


\begin{proof}
By compactness of the support of $\mu,\nu$, it follows that the set of all semicouplings between $\mu$ and $\nu$ is compact. Since $c$ is continuous and bounded from below it follows that the map $q\mapsto \int c(x.y) dq(x,y)$ is lower semicontinuous. Hence, there exists a minimizer $q*$ with marginals $f\cdot\mu$ and $\nu$. Moreover, since $q*$ is optimal between its marginals, it follows by mutual singularity of $\mu$ and $\nu$ that there exists a map $T$ such that $q*=(id,T)(f\cdot \mu)$, \cite[Theorem 4.6]{Sant15}. 

We claim that $f=1_{B}$ for some measurable set $B$. This implies uniqueness. Indeed, if $q_{1}$ and $q_{2}$ are two potentially different optimizer with densities $1_{B_{1}}$ and $1_{B_{2}}$ respectively, then also $q_{3}=\frac12(q_{1}+q_{2})$ is an optimizer by linearity whose density has to satisfy $1_{B_{3}}=\frac12(1_{B_{1}}+1_{B_{2}}) \ \mu-a.s.$ Hence, we obtain that $B_{1}=B_{2}=B_{3}\ \mu-a.s.$ and therefore uniqueness.

To show the claim, we will argue by contradiction. Let us assume that $\mu(\{ f<1\})>0$ so that there is an $\varepsilon> 0$ such that $A=\{f\le 1-\varepsilon\}$ has positive $\mu$ measure. Then, $\tilde q=q_{|A\times \IR^d}$ is optimal between its marginals $\tilde f\mu$ and $\tilde \nu$. For notational simplicity we can then assume that $\tilde q=q$ and $\tilde f=f\le 1-\varepsilon.$ By mutual singularity of $\mu$ and $\nu$ there is $y$ such that 
\begin{itemize}
\item[i)] $\nu(B_{r}(y))>0$ for all $r>0$, i.e.\ $y \in \mathsf{supp}(\nu)$
\item[ii)] $\lim_{r\to 0}\frac{\mu(B_{r}(y))}{\nu(B_{r}(y))} = 0$ by mutual singularity of $\mu$ and $\nu$.
\end{itemize}
In particular, for any $\delta>0$ there is $r>0$ such that $\nu(B_{r}(y))>0$ and $\mu(B_{r}(y))\le \delta \nu(B_{r}(y))$ such that $q(B_{r}(y)^c\times B_{r}(y))>0$. However, since $f\le 1-\varepsilon$ we can use the the mass within $T^{-1}(B_{r}(y))\setminus B_{r}(y)$ which is transported to $B_{r}(y)$ more efficiently to produce a coupling with cheaper cost. In the remaining part of the proof, we will explicitly construct such a competitor to $q$.

Let $(x_0,y_0)\in \mathsf{supp}(q_{|B_r(y)^c\times B_r(y)})$ and choose $r'>0$
sufficiently small, for example $r'<\sqrt{\frac{1}{{2d}}|x_0-y_0|}$ suffices.
  Since $q(B_{r'}(x_0)\times B_{r'}(y_0))>0$ there exists by \cite[Lemma 4.1]{Sant15}
 a point  $x_1\in B_{r'}(x_0)$ with the following property \begin{align}\label{eq:GMTlemma}
\forall \alpha>0,\forall \delta>0,\forall u\in \mathbb{S}^{d-1}: q((C(x_1,u,\delta,\alpha)\cap B_{r'}(x_0))\times B_{r'}(y_0))>0,
\end{align}where
\[
C(x_1,u,\delta,\alpha)=\{z:u\cdot (z-x_1)\geq (1-\delta)|z-x_1|\}\cap \bar{B}_{\alpha}(x_1).
\]
Let  $u_-=\frac{x_1-y_0}{|x_1-y_0|}$ and $u_+=\frac{y_0-x_1}{|y_0-x_1|}$ and set $C_-=C(x_1,u_-,\delta,\alpha)\setminus\{x_1\}$ and $C_+=C(x_1,u_+,\delta,\alpha)\setminus\{x_1\}$.
Then an elementary geometric argument shows that the following holds. For all $\delta, \alpha>0$ small enough\begin{align}\label{eq:cone_inequ}
\forall z_- \in C_-, \forall z_+ \in C_+, \forall \tilde{y}\in B_{r'}(y_0): |z_+-\tilde{y}|<|z_--\tilde{y}|.
\end{align}
In the following fix such $r',\delta, \alpha >0$. In particular, let $r'>\alpha$ so that the intersection in \eqref{eq:GMTlemma} reduces to \[
C_+\cap B_{r'}(x_0)=C_+,
\]and similiarly for $C_-$. For $0<s<1$ there exists $t=t(s)>1$ such that \[
sq(C_-\times B_{r'}(y_0))+tq(C_+\times B_{r'}(y_0))=q((C_-\cup C_+)\times B_{r'}(y_0)).
\]
Let $\pi$ be an optimal coupling of $(t-1)\mathsf{pr}_1(q_{|C_+ \times B_{r'}(y_0)})$ and $(1-s)\mathsf{pr}_2(q_{|C_- \times B_{r'}(y_0)})$. These measures have the same mass since\[
sq(C_-\times B_{r'}(y_0))+tq(C_+\times B_{r'}(y_0))=q((C_-\cup C_+)\times B_{r'}(y_0))
=q(C_-\times B_{r'}(y_0))+q( C_+\times B_{r'}(y_0)).
\]
Then define \[
\hat q=sq_{|C_-\times B_{r'}(y_0)}+q_{|C_+\times B_{r'}(y_0)}+\pi.
\]
Since $\lim_{s\nearrow 1}t(s)=1$, it follows from  $f\leq 1-\epsilon$ that $t f\leq 1$ for $s<1$ large enough.
 Hence $\hat q$ defines a semicoupling of $\mu_{|C_-\cup C_+}$ and $\mathsf{pr}_2(q_{|(C_-\cup C_+)\times B_{r'}(y_0)})$. Thus $\hat q$ is an admissible competitor to $q_{|(C_-\cup C_+)\times B_{r'}(y_0)}$. Disintegration w.r.t. the second marginal of the measures $q_{|(C_-\cup C_+)\times B_{r'}(y_0)}$ and $\pi$ yields \begin{align*}
&\int_{(C_-\cup C_+)\times B_{r'}(y_0)} \vartheta(|x-y|)dq-\int_{(C_-\cup C_+)\times B_{r'}(y_0)} \vartheta(|x-y|)d\hat q\\
&=(1-s)\int_{C_-\times B_{r'}(y_0)} \vartheta(|x-y|)dq-\int_{ C_+\times B_{r'}(y_0)} \vartheta(|x-y|)d\pi\\
&=(1-s)\int_{B_{r'}(y_0)}\mathsf{pr}_2(q_{|C_- \times B_{r'}(y_0)})(d\tilde{y})\left[\int_{C_-} dq_{\tilde{y}}(dx)\vartheta(|x-y|)-\int_{C_+}d\pi_{\tilde{y}}(dx)\vartheta(|x-y|)\right].
\end{align*}
Inequality \eqref{eq:cone_inequ} implies that the last line is strictly positive. That is,
\[
\int_{(C_-\cup C_+)\times B_{r'}(y_0)} \vartheta(|x-y|)dq>\int_{(C_-\cup C_+)\times B_{r'}(y_0)} \vartheta(|x-y|)d\hat q.
\]
This, however, contradicts the optimality of $q$.
\end{proof}

We give a very short sketch of the proof of Theorem \ref{thm:semicoupling}.

\begin{proof}[Sketch of proof of Theorem \ref{thm:semicoupling}]

Existence of an optimal semicoupling can be proven exactly as in the proof of \cite[Proposition 3.18]{H16}. 

In order to establish uniqueness, we introduce the notion of local optimality.
In our setup, an equivariant coupling $q$ is locally optimal iff the following holds for $\IP$-a.e. $\omega\in\Omega$:

There exists a nonnegative density $\rho^{\omega}$ and a c-cyclically monotone map $T^{\omega}:\{\rho^{\omega}>0\}\to \IR^d$ such that on $\{\rho^{\omega}>0\}\times \IR^d$\[
q^{\omega}=(Id,T^{\omega})_{\#}(\rho^{\omega}\xi^{\omega});
\]
see Definition 5.3 in \cite{H16}.
Local optimality of optimal semicouplings can be shown as in Proposition 3.1 and Theorem 3.6 in \cite{HS13}. The proof of \cite[Proposition 3.1]{HS13} relies on uniqueness of optimal semicouplings on bounded sets, a fact which in our setting is provided by Lemma \ref{lem:unique_semicp}. 

Now we can prove uniqueness of optimal semicouplings. Let $q_1,q_2$ be two optimal semicouplings. By local optimality there exist maps $T_i$ and densities $\rho_i$, $i=1,2$, such that 
$q_i^{\omega}=(Id,T_i^{\omega})_{\#}(\rho_i^{\omega}\xi^{\omega})$. Restricting the $q_i$ to bounded sets, it follows from optimality and from Lemma \ref{lem:unique_semicp}, that we can assume $\rho_i^{\omega}=\eins_{A_i^{\omega}}$, for some measurable set $A_i^{\omega}\subset \IR^d$. Applying the same reasoning to the optimal semicoupling $q=\frac{1}{2}(q_1+q_2)$ proves that $q_1=q_2$.
\end{proof}

Finally we will need the following version of the Lemma of de la Vall\'ee Poussin.

\begin{lem}\label{lem_Vallee_Poussin}
Let $f:[0,\infty)\to [0,\infty)$ be a Lebesgue integrable function. Then there exists a continous,  concave function $\vartheta:[0,\infty)\to[0,\infty)$  with $\vartheta(0)=0$ such that \[
\int_0^{\infty} f(x) \vartheta(x)dx<\infty.
\] 
Moreover $\vartheta$ can be chosen to be strictly increasing, smooth on $(0,\infty)$ and such that  $\lim_{x\to \infty}\vartheta(x)=\infty$.
\end{lem}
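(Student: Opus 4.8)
The statement to prove is Lemma \ref{lem_Vallee_Poussin}, the de la Vallée Poussin–type lemma. Let me sketch a proof.

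\textbf{Plan of proof.}
The plan is to exploit that an integrable function $f$ decays, in an averaged sense, fast enough along dyadic blocks, and to build $\vartheta$ by specifying its increments (slopes) on those blocks so that the resulting function is concave, unbounded, and still integrates $f\vartheta$ to a finite value. The key quantitative input is that $\int_0^\infty f < \infty$ forces the tail masses $a_n := \int_{2^n}^{2^{n+1}} f(x)\,dx$ to form a summable sequence; in fact $\sum_n a_n \le \int_0^\infty f < \infty$.

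\textbf{Step 1: choose a slowly growing weight on dyadic blocks.} Since $(a_n)_{n \ge 0}$ is summable, a standard fact (the sequence version of de la Vallée Poussin) gives a nondecreasing sequence $0 \le c_0 \le c_1 \le \cdots$ with $c_n \to \infty$ and $\sum_n c_n a_n < \infty$; concretely one can take $c_n = 1/\sqrt{R_n}$ where $R_n = \sum_{k \ge n} a_k \downarrow 0$ (if some $R_n = 0$ truncate appropriately, or add a harmless summable tail to $f$), using $c_n a_n = (R_n - R_{n+1})/\sqrt{R_n} \le 2(\sqrt{R_n} - \sqrt{R_{n+1}})$ which telescopes. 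We may also assume $c_n$ grows slowly, e.g. $c_{n+1} \le 2 c_n$, by replacing $c_n$ with $\min(c_n, c_0 2^n)$ if necessary, which only helps summability.

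\textbf{Step 2: define $\vartheta$ as a piecewise-linear concave function.} Declare $\vartheta(0) = 0$ and let $\vartheta$ be linear on each interval $[2^n, 2^{n+1}]$ with slope $s_n$ to be chosen decreasing in $n$; set $\vartheta$ linear with slope $s_{-1}$ on $[0,1]$ too (i.e. handle the block $[0,1]$ separately with a bounded contribution). Concavity is equivalent to $s_n$ being nonincreasing. To make $\vartheta$ both unbounded and cheap, pick $s_n \asymp c_n / 2^n$ but arranged to be nonincreasing — e.g. $s_n := \sup_{m \ge n} c_m/2^m$; since $c_m$ grows slower than $2^m$ this supremum is finite and tends to $0$, and $s_n \ge c_n/2^n$. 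Then $\vartheta(2^{N}) = \sum_{n<N} s_n 2^n \ge \sum_{n<N} c_n \to \infty$ provided $\sum_n c_n = \infty$; this last divergence we arrange in Step 1 by choosing $c_n$ large enough (the sequence de la Vallée Poussin construction can always be pushed so that $\sum c_n a_n < \infty$ while $\sum c_n = \infty$, since one has freedom to make $c_n$ as large as $\varepsilon_n/a_n$-type quantities allow). On $[2^n,2^{n+1}]$ we have $\vartheta(x) \le \vartheta(2^{n+1}) = \sum_{k \le n} s_k 2^k$, which one checks is $\lesssim 2^n s_n \lesssim c_n$ after using $s_k 2^k$ roughly geometric; hence
\[
\int_0^\infty f(x)\vartheta(x)\,dx \;=\; \int_0^1 f\,\vartheta + \sum_{n \ge 0} \int_{2^n}^{2^{n+1}} f(x)\vartheta(x)\,dx \;\lesssim\; \vartheta(1)\!\int_0^1\! f + \sum_{n\ge 0} c_n a_n \;<\;\infty.
\]

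\textbf{Step 3: smoothing and strict monotonicity.} The function from Step 2 is concave, continuous, nondecreasing, vanishes at $0$, and tends to $\infty$. To get strict monotonicity and smoothness on $(0,\infty)$ as required, replace $\vartheta$ by $\tilde\vartheta(x) := \frac12\big(\vartheta(x) + \int_0^\infty \vartheta(x - y \wedge x)\,\phi_\epsilon(y)\,dy\big) + \arctan(x)$ — more simply: add the bounded strictly increasing concave term $\arctan(x)$ (which changes $\int f\vartheta$ by at most $\tfrac{\pi}{2}\int f < \infty$ and preserves concavity and the limit), and then mollify the piecewise-linear part with a compactly supported kernel; mollification of a concave function stays concave, and since $\vartheta$ is Lipschitz near each point the mollified version stays within a constant multiple, preserving finiteness of $\int f\vartheta$. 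A small additional care near $0$: mollify with a kernel supported in $(-\delta,\delta)$ and shift so that $\tilde\vartheta(0)=0$ is restored by subtracting the (finite) value; this only decreases $\tilde\vartheta$.

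\textbf{Main obstacle.} The only real subtlety is the simultaneous requirement $\sum_n c_n a_n < \infty$ \emph{and} $\sum_n c_n = \infty$ together with $s_n := \sup_{m\ge n} c_m 2^{-m}$ being finite; one must verify these are compatible. They are: the classical de la Vallée Poussin argument produces, for any summable $(a_n)$, weights $c_n \to \infty$ with $\sum c_n a_n < \infty$, and one can always enlarge finitely-supported-looking gaps so that $\sum c_n$ diverges while keeping $c_n \le 2^n$ (hence $s_n < \infty$); if $a_n$ is eventually $0$ one first adds $2^{-n}$ to $a_n$, which is harmless. Everything else — concavity under mollification, the geometric-series estimate $\vartheta(2^{n+1}) \lesssim c_n$, and the bounded perturbation $\arctan$ — is routine. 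I would present Steps 1–2 carefully and dispatch Step 3 in a couple of lines.
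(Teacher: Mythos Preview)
Your approach is genuinely different from the paper's: the paper invokes a ready-made convex de la Vall\'ee Poussin function $\Phi$ from \cite[Theorem~2.8]{Laurencot} applied to the measure $\eins_{[1,\infty)}(x)\frac{f(x)}{x}\,dx$ and sets $\vartheta(x)=\Phi(x)/x$, so all the required properties (concavity, smoothness, divergence) come for free from that reference. Your dyadic, hands-on construction is in principle a valid alternative and has the advantage of being self-contained, but there is a real gap in Step~2.

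The problematic claim is that $\vartheta(2^{n+1})=\sum_{k\le n} s_k 2^k \lesssim 2^n s_n \lesssim c_n$. Under your growth condition $c_{m+1}\le 2c_m$ one indeed gets $2^n s_n\asymp c_n$, so $s_k 2^k\asymp c_k$; but then $\sum_{k\le n} s_k 2^k\asymp \sum_{k\le n} c_k$, and this is \emph{not} $\lesssim c_n$ unless $c_k$ grows geometrically. For instance, if $a_n\sim n^{-3}$ then $R_n\sim n^{-2}$ and your choice $c_n=R_n^{-1/2}\sim n$ gives $\vartheta(2^{n+1})\sim n^2$, so your bound reads $\int f\vartheta\lesssim \sum_n n^2 a_n=\infty$, even though $\sum_n c_n a_n\sim\sum_n n^{-2}<\infty$. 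Thus the estimate fails and the constructed $\vartheta$ need not satisfy $\int f\vartheta<\infty$.

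The fix is to control $\vartheta(2^{n+1})$ itself, not the slopes: choose $c_n\nearrow\infty$ with $\sum_n c_n a_n<\infty$ \emph{and} with $(c_{n+1}-c_n)/2^n$ nonincreasing (pass to a concave-type minorant of your original $c_n$, checking it still diverges), then set $\vartheta$ piecewise linear with $\vartheta(2^n)=c_n$. Equivalently, after the summation-by-parts identity $\sum_n a_n\vartheta(2^{n+1})=\sum_k s_k 2^k R_k$, choose the increments $b_k:=s_k 2^k$ directly so that $\sum_k b_k R_k<\infty$, $\sum_k b_k=\infty$, and $b_{k+1}\le 2b_k$; this is where the de la Vall\'ee Poussin argument should be applied, to the tails $R_k$ rather than to $a_k$. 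Step~3 is fine modulo routine details.
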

\begin{proof}
Let $\Phi$ be the convex function obtained by applying \cite[Theorem 2.8]{Laurencot} to the  function $g(x)=x$  and the measure $\eins_{[1,\infty)}(x)\frac{f(x)}{x}dx$. That is, $\Phi\in C^{\infty}([0,\infty))$, $\Phi'(0)=\Phi(0)=0$, $\Phi$ is integrable w.r.t. $\eins_{[1,\infty)}(x)\frac{f(x)}{x}dx$ and $\Phi'$ is a concave function. Moreover \[
\frac{\Phi(r)}{r}\xrightarrow{x\to \infty}\infty\text{ and } \Phi'(r)>0\quad \forall  r>0.
\]Define $\vartheta(x)=\frac{\Phi(x)}{x}$ for $x>0$ and extend this function continuously by letting $\vartheta(0)=\Phi'(0)=0$. Note that, since $\Phi$ is convex, $\vartheta$ is strictly increasing. Concavity of $\vartheta$ follows from \cite[Proposition 2.14]{Laurencot}. Finally the integrability of $f\cdot \vartheta$ follows from \begin{align*}
\int_0^{\infty} f(x) \vartheta(x)dx&=\int_0^1f(x)\vartheta(x)dx+\int_1^{\infty}f(x)\frac{\Phi(x)}{x}dx\\
&\leq \sup_{x\in [0,1]}\vartheta(x)\int_0^1f(x)dx+\int_1^{\infty}f(x)\frac{\Phi(x)}{x}dx<\infty.
\end{align*}
\end{proof}

%\begin{rem}\label{rem:coupling}
%If the intesities in Theorem \ref{thm:semicoupling} coincide, then the equivariant semicoupling $q$ is in fact a equivariant coupling, i.e. the inequality in \eqref{thm:semicoupling} is an equality.
%\end{rem}

\section{Proof of Theorem \ref{thm:main}}

From now on, we will assume that $(\xi,\eta)$ are jointly stationary and ergodic random measures with the same finite intensities. We start by showing that there is a concave strictly increasing and diverging function $\vartheta$ such that the mean transportation cost \eqref{eq:meancost} w.r.t.\ $c(x,y)=\vartheta(|x-y|)$ is finite. Combining this with Theorem \ref{thm:semicoupling} implies existence of allocations in the case that $\xi$ and $\eta$ are mutually singular, see Subsection \ref{sec:mutually_sing}. In a next step we will prove our main result in the case that both $\xi$ and $\eta$ do not charge small sets, see Subsection \ref{sec:smallsets}. Finally, we will show the general statement in Subsection \ref{sec:general}.

\subsection{Existence of an equivariant coupling with finite cost}

%The following lemma proves the existence of a function $\vartheta$ such that the mean transportation cost of $\xi$ and $\eta$ is finite.
\begin{lem}\label{lem:cost_function}
Let $\xi$ and $\eta$ be two jointly stationary and ergodic random measures with the same finite intensity.
There exists an equivariant coupling $q$ of $\xi$ and $\eta$ and a concave function $\vartheta:[0,\infty)\to[0,\infty)$ such that \[
\IE\left[\int_{\Lambda_1\times \IR^d}\vartheta(\abs{x-y})q(dx,dy)\right]<\infty.
\]Furthermore, the function $\vartheta$ can be chosen to be continuous and strictly increasing and such that $\vartheta(0)=0$ and $\lim_{x\to \infty}\vartheta(x)=\infty$.
\end{lem}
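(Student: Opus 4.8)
The plan is to pull \emph{some} equivariant coupling of $\xi$ and $\eta$ off the shelf and then tailor a slowly diverging concave cost function to it by means of Lemma \ref{lem_Vallee_Poussin}. First I would invoke \cite[Theorem 5.1]{Last09}: since $\xi$ and $\eta$ are jointly stationary, ergodic and of equal finite intensity, there is an invariant (equivariant) balancing transport kernel, and disintegrating $\xi$ against it yields an equivariant coupling $q\in\Cpl_{es}(\xi,\eta)$, i.e.\ $(\pi_1)_\#q=\xi$ and $(\pi_2)_\#q=\eta$. This $q$ will be the coupling in the statement, so all that remains is to exhibit a suitable $\vartheta$.

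Next I would record the one-dimensional \emph{displacement measure} of $q$,
\[
\nu(A):=\IE\!\left[\int_{\Lambda_1\times\IR^d}\eins_A(|x-y|)\,q(dx,dy)\right],\qquad A\in\mathcal B([0,\infty)),
\]
and note that $\nu([0,\infty))=\IE[\xi(\Lambda_1)]<\infty$ because $\xi$ has finite intensity and $(\pi_1)_\#q=\xi$; hence $\nu$ is a finite Borel measure on $[0,\infty)$, and for every measurable $\vartheta\ge 0$
\[
\IE\!\left[\int_{\Lambda_1\times\IR^d}\vartheta(|x-y|)\,q(dx,dy)\right]=\int_0^\infty\vartheta\,d\nu .
\]
Thus the problem reduces to producing a continuous, strictly increasing, concave $\vartheta:[0,\infty)\to[0,\infty)$ with $\vartheta(0)=0$, $\lim_{x\to\infty}\vartheta(x)=\infty$, and $\int_0^\infty\vartheta\,d\nu<\infty$.

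The one point requiring a little care is that $\nu$ need not be absolutely continuous, so Lemma \ref{lem_Vallee_Poussin}, which is stated for integrable densities, does not apply to $\nu$ directly; I would circumvent this by a harmless smoothing. Set $\tilde\nu(B):=\int_0^\infty\int_0^1\eins_B(t+u)\,du\,\nu(dt)$. Then $\tilde\nu$ is a finite measure on $[0,\infty)$ with density $f(t)=\nu([t-1,t])\le\nu([0,\infty))$, and $f$ is Lebesgue integrable since $\int_0^\infty f(t)\,dt=\tilde\nu([0,\infty))=\nu([0,\infty))<\infty$. Applying Lemma \ref{lem_Vallee_Poussin} to $f$ yields a $\vartheta$ with all the required properties and $\int_0^\infty f(t)\vartheta(t)\,dt<\infty$. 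Since $\vartheta$ is nondecreasing, $\vartheta(t)\le\vartheta(t+u)$ for $t\ge 0$ and $u\in[0,1]$, so
\[
\int_0^\infty\vartheta\,d\nu=\int_0^\infty\!\!\int_0^1\vartheta(t)\,du\,\nu(dt)\le\int_0^\infty\!\!\int_0^1\vartheta(t+u)\,du\,\nu(dt)=\int_0^\infty\vartheta(s)f(s)\,ds<\infty .
\]
Combined with the displacement-measure identity above, this gives finite mean transportation cost for $q$ with respect to $c(x,y)=\vartheta(|x-y|)$, which is the assertion of the lemma.

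I do not anticipate a genuine obstacle here: the substantive input — the very existence of an equivariant coupling between two equal-intensity jointly stationary ergodic random measures — is black-boxed via \cite[Theorem 5.1]{Last09}, and everything else is the routine bookkeeping sketched above. The only step meriting attention is the passage from the possibly singular displacement measure $\nu$ to an absolutely continuous one, so that the de la Vallée Poussin lemma in the form of Lemma \ref{lem_Vallee_Poussin} can be applied and then transferred back using monotonicity of $\vartheta$.
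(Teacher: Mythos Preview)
Your argument is correct and follows the same strategy as the paper: obtain an equivariant coupling $q$ from \cite[Theorem 5.1]{Last09}, turn the displacement distribution into a Lebesgue-integrable function on $[0,\infty)$, apply Lemma \ref{lem_Vallee_Poussin}, and then estimate back. The only difference is cosmetic: the paper discretises by setting $f=\sum_{n\ge 0}a_n\eins_{[n,n+1)}$ with $a_n=\nu([n,n+1))$ and then needs the concavity bound $\vartheta(n+1)\le\int_n^{n+1}\vartheta+\vartheta'(1)$, whereas your convolution smoothing $f(t)=\nu([t-1,t])$ lets you close the estimate using only monotonicity of $\vartheta$, which is a slight streamlining.
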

\begin{proof}
By \cite[Theorem 5.1]{Last09}, there exists an equivariant coupling $q$ of $\xi$ and $\eta$, since their intensities coincide. We are going to construct the desired function $\vartheta$. Since $\xi$ has finite intensity, we can write
\begin{align*}
\IE\left[\xi\left(\Lambda_1\right)\right]=\IE\left[\int_{\Lambda_1\times \IR^d}q(dx,dy)\right]=\sum_{n\geq 0}\IE\left[\int_{\Lambda_1\times \IR^d}\eins_{n\leq \abs{x-y}<n+1}q(dx,dy)\right]=\sum_{n\geq 0}a_n<\infty,
\end{align*}
with $a_n=\IE\left[\int_{\Lambda_1\times \IR^d}\eins_{n\leq \abs{x-y}<n+1}q(dx,dy)\right]$.
Define the function $f:[0,\infty)\to[0,\infty)$ by $f=\sum_{n\geq 0}\eins_{[n,n+1)}a_n$. By construction, $f$ is integrable. Hence, from Lemma \ref{lem_Vallee_Poussin} it follows that there exists a function $\vartheta$ with the properties listed in the statement of this lemma, which is smooth on $(0,\infty)$ and satifies 
\[
\sum_{n\geq 0}a_n\int_n^{n+1}\vartheta(x)dx=\int_0^{\infty}\vartheta(x)f(x)dx<\infty.
\]Then \begin{align*}
\IE\left[\int_{\Lambda_1\times \IR^d}\vartheta(\abs{x-y})q(dx,dy)\right]&=
\sum_{n\geq 0}\IE\left[\int_{\Lambda_1\times \IR^d}\vartheta(\abs{x-y})\eins_{n\leq \abs{x-y}<n+1}q(dx,dy)\right]\\
&\leq \sum_{n\geq 0}\vartheta(n+1)\IE\left[\int_{\Lambda_1\times \IR^d}\eins_{n\leq \abs{x-y}<n+1}q(dx,dy)\right]\\
&= \sum_{n\geq 0}\vartheta(n+1)a_n.
\end{align*}
For $n\geq 1$ we estimate using concavity of $\vartheta$ in the last step
\[
\vartheta(n+1)-\int_n^{n+1}\vartheta(x)dx=\int_n^{n+1}\vartheta(n+1)-\vartheta(x)dx \leq \sup_{x\in [1,\infty)}\vartheta'(x)=\vartheta'(1).
\] Hence we can bound \begin{align*}
&\IE\left[\int_{\Lambda_1\times \IR^d}\vartheta(\abs{x-y})q(dx,dy)\right] 
\leq a_0\vartheta(1)+\sum_{n\geq 1}\vartheta(n+1)a_n\\
&\leq a_0\vartheta(1)+\sum_{n\geq 1}a_n \left(\int_n^{n+1}\vartheta(x)dx+\vartheta'(1)\right) <\infty.
\end{align*}
\end{proof}

\subsection{Mutually singular measures}\label{sec:mutually_sing}
%Note that Theorem \ref{thm:semicoupling} implies the following corollary.

\begin{cor}\label{cor:mutually_singular}
Let $\xi$ and $\eta$ be two jointly stationary and ergodic random measures with the same finite intensity, which are  $a.s.$ mutually singular. Furthermore, assume that $\xi$ does not charge $(d-1)$-rectifiable sets. Then there exists a factor allocation.
\end{cor}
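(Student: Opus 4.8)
The plan is to combine Lemma~\ref{lem:cost_function} with Theorem~\ref{thm:semicoupling}. First I would invoke Lemma~\ref{lem:cost_function} to obtain a concave, strictly increasing, diverging function $\vartheta:[0,\infty)\to[0,\infty)$ with $\vartheta(0)=0$ together with an equivariant coupling $q$ of $\xi$ and $\eta$ such that the mean transportation cost \eqref{eq:meancost} with respect to $c(x,y)=\vartheta(\abs{x-y})$ is finite. Note that the infimum in \eqref{eq:meancost} is taken over all equivariant semicouplings, so the existence of one equivariant \emph{coupling} with finite cost certainly shows the infimum over semicouplings is finite; hence the finiteness hypothesis of Theorem~\ref{thm:semicoupling} is satisfied.

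Next I would check the remaining hypotheses of Theorem~\ref{thm:semicoupling}. By assumption $\xi$ and $\eta$ are a.s.\ mutually singular, and $\xi$ a.s.\ does not charge $(d-1)$-rectifiable sets. Since $\xi$ and $\eta$ have the same finite intensity, in particular the intensity of $\xi$ is greater than or equal to that of $\eta$, so the intensity condition holds. The function $\vartheta$ produced above has exactly the properties required (strictly increasing, concave, $\vartheta(0)=0$, $\lim_{x\to\infty}\vartheta(x)=\infty$). Therefore Theorem~\ref{thm:semicoupling} applies and yields an equivariant semicoupling $q^*=(Id,T)_\#(f\cdot\xi)$ with $T$ and $f$ measurable with respect to $\sigma(\xi,\eta)$.

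Finally, I would use the ``Moreover'' clause of Theorem~\ref{thm:semicoupling}: since $\xi$ and $\eta$ have \emph{equal} intensities, the equivariant semicoupling is in fact a coupling, which forces $f\equiv 1$ ($\xi$-a.e.) and $q^*=(Id,T)_\#\xi$ with $(\pi_2)_\#q^*=\eta$. Unwinding the definitions, this says precisely that for $\IP$-a.e.\ $\omega$ the map $T^\omega$ pushes $\xi^\omega$ forward to $\eta^\omega$, i.e.\ $T$ is an allocation balancing $\xi$ and $\eta$; and since $T$ depends measurably only on $\sigma(\xi,\eta)$, it is a factor allocation. This completes the proof.

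There is essentially no obstacle here: the corollary is a direct assembly of the two preceding results, and the only thing to be slightly careful about is the bookkeeping of the intensity inequality versus equality (equality is needed to upgrade ``semicoupling'' to ``coupling'') and the observation that a single finite-cost equivariant coupling witnesses finiteness of the infimum over the larger class of equivariant semicouplings. All the genuine work — the de~la~Vall\'ee~Poussin construction, the uniqueness Lemma~\ref{lem:unique_semicp}, and the equivariant optimal transport machinery behind Theorem~\ref{thm:semicoupling} — has already been done.
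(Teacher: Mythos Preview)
Your proposal is correct and follows exactly the same route as the paper's own proof: invoke Lemma~\ref{lem:cost_function} to produce an admissible $\vartheta$ with finite mean transportation cost, verify the hypotheses of Theorem~\ref{thm:semicoupling}, and then use the equal-intensity ``Moreover'' clause to upgrade the semicoupling to a coupling, yielding a factor allocation. Your write-up is more explicit about why $f\equiv 1$ and why a finite-cost coupling witnesses finiteness of the semicoupling infimum, but these are elaborations of the same argument rather than a different approach.
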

\begin{proof}
From  Lemma \ref{lem:cost_function} we obtain a function $\vartheta$, which yields finite  mean transportation cost and satisfies the properties listed in Theorem \ref{thm:semicoupling}.  The other assumptions of Theorem \ref{thm:semicoupling} are also satisfied. Finally note that, since the random measures $\xi$ and $\eta$ have the same intensity, the optimal semicoupling is a coupling. Hence the random map $T^{\omega}$ is  a factor allocation for $\xi$ and $\eta$. 
\end{proof}

\subsection{Measures that do not charge small sets}\label{sec:smallsets}
In this subsection, we assume that both $\xi$ and $\eta$ do not charge small sets. We consider the decompositions $\xi=(\xi\wedge \eta)+(\xi-\eta)_+$ and  $\eta=(\xi\wedge \eta)+(\eta-\xi)_+$.
Here $(\xi-\eta)_+$ denotes the positive part of the Jordan decomposition of $\xi-\eta$ and the measure $(\eta-\xi)_+$ is analogously defined.
 Note that  the measures $(\xi-\eta)_+$ and $(\eta-\xi)_+$ are mutually singular, do not charge small sets, i.e.\ do not give mass to $(d-1)$-rectifiable sets, and have the same intensity. 
 
\begin{prop}\label{prop:no_small_sets}
Let $\xi$ and $\eta$ be two jointly stationary and ergodic random measures with the same finite intensity. Assume that $\xi$ and $\eta$ do not charge small sets. Then there exists a factor allocation.
\end{prop}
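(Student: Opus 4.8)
The plan is to follow exactly the strategy outlined in the introduction, reducing Proposition~\ref{prop:no_small_sets} to the mutually singular case handled in Corollary~\ref{cor:mutually_singular}. First I would apply Corollary~\ref{cor:mutually_singular} to the pair $(\xi-\eta)_+$ and $(\eta-\xi)_+$: these are jointly stationary and ergodic (being factors of $(\xi,\eta)$), have the same finite intensity, are a.s. mutually singular, and do not charge small sets, so there is a factor allocation $T$ balancing them. The point of having such an auxiliary $T$ is to produce an equivariant, $\sigma(\xi,\eta)$-measurable decomposition of $\IR^d$. Fixing any deterministic measurable map $G:\IR^d\to\IR^d$ with, say, all level sets $\{G=c\}$ being $\lebesgue$-null (so that the sublevel-set decomposition behaves continuously in the threshold), I would consider for each $t\in\IR$ the sets $S_t^{\omega}=\{x: G(T^{\omega}(x)-x)\le t\}$ and its complement. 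Equivariance of $T$ forces $S_t^{\theta_x\omega}=S_t^{\omega}-x$, so the restricted measures $\eins_{S_t}\xi$, $\eins_{S_t^c}\xi$, $\eins_{S_t}\eta$, $\eins_{S_t^c}\eta$ are all equivariant random measures depending only on $(\xi,\eta)$.

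Next I would run the intermediate-value argument on the threshold $t$. Define $\phi(t)=\IE[(\eins_{S_t}\xi)(\Lambda_1)] - \IE[(\eins_{S_t}\eta)(\Lambda_1)]$, the difference of intensities of $\eins_{S_t}\xi$ and $\eins_{S_t}\eta$. As $t\to-\infty$, $S_t\downarrow$ a null set (using that $T$ maps $\supp((\xi-\eta)_+)$ into $\supp((\eta-\xi)_+)$ and a mild properness assumption on $G$, or simply $G(z)\to\partial$ appropriately), so $\phi(t)\to -\IE[\eta(\Lambda_1)]<0$ or at least $\phi(-\infty)\le 0$; as $t\to+\infty$, $S_t\uparrow\IR^d$ so $\phi(t)\to 0$ from the side making $\phi(+\infty)\ge 0$. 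I would then check that $t\mapsto\phi(t)$ is continuous — this is where the choice of $G$ with null level sets enters, via dominated convergence and the fact that $\xi,\eta$ do not charge the small sets on which the indicator jumps. Hence there is $t_0$ with $\phi(t_0)=0$, i.e. $\mu_1:=\eins_{S_{t_0}}\xi$ and $\nu_1:=\eins_{S_{t_0}^c}\eta$... wait, more precisely one wants $\eins_{S_{t_0}}\xi$ and $\eins_{S_{t_0}}\eta$ to have equal intensity, hence also $\eins_{S_{t_0}^c}\xi$ and $\eins_{S_{t_0}^c}\eta$ have equal intensity. Crucially, $\eins_{S_{t_0}}\xi$ and $\eins_{S_{t_0}^c}\eta$ are mutually singular (disjoint supports), and likewise $\eins_{S_{t_0}^c}\xi$ and $\eins_{S_{t_0}}\eta$; and since $\xi,\eta$ do not charge small sets, neither do these restrictions.

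I would then apply Corollary~\ref{cor:mutually_singular} twice: once to the pair $\big(\eins_{S_{t_0}}\xi,\ \eins_{S_{t_0}^c}\eta\big)$ to get a factor allocation $T_1$, and once to $\big(\eins_{S_{t_0}^c}\xi,\ \eins_{S_{t_0}}\eta\big)$ to get a factor allocation $T_2$. Finally I would glue: define $T^*(\omega,x)=T_1(\omega,x)$ for $x\in S_{t_0}^{\omega}$ and $T^*(\omega,x)=T_2(\omega,x)$ for $x\in S_{t_0}^{\omega,c}$. Since $\xi=\eins_{S_{t_0}}\xi+\eins_{S_{t_0}^c}\xi$ and the pushforwards are $\eins_{S_{t_0}^c}\eta$ and $\eins_{S_{t_0}}\eta$ respectively, whose sum is $\eta$, the map $T^*$ pushes $\xi^{\omega}$ to $\eta^{\omega}$; equivariance and $\sigma(\xi,\eta)$-measurability are inherited from $T_1,T_2$ and the equivariance of the partition $S_{t_0}$; so $T^*$ is the desired factor allocation. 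One technical wrinkle to address: a priori $t_0=t_0(\omega)$, but by ergodicity and the fact that $\phi$ is defined via the expectation (equivalently, by stationarity the intensities $\IE[\cdot(\Lambda_1)\mid\mathcal I]$ are a.s. constant), $t_0$ can be taken deterministic; I should state this carefully.

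The main obstacle I anticipate is the continuity and boundary behaviour of $\phi(t)=\IE[(\eins_{S_t}\xi)(\Lambda_1)]-\IE[(\eins_{S_t}\eta)(\Lambda_1)]$ in $t$, together with choosing $G$ so that the argument is clean. Continuity requires that $\xi$ and $\eta$ give no mass to each hypersurface-like set $\{x:G(T(x)-x)=t\}$; this is exactly where the no-small-sets hypothesis is used, provided one knows such a level set is $(d-1)$-rectifiable — which suggests taking $G$ to be a fixed smooth proper function (e.g. $G(z)=|z|$ composed with a homeomorphism of $[0,\infty)$, or a coordinate projection) so that the level sets have the right structure and $S_t\uparrow\IR^d$, $S_t\downarrow\emptyset$ in the appropriate limits. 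I would spend most of the write-up making this measurable-selection-free intermediate value step rigorous; the two applications of Corollary~\ref{cor:mutually_singular} and the gluing are then essentially bookkeeping.
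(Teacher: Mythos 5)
Your high-level plan — decompose into $(\xi-\eta)_+$ and $(\eta-\xi)_+$, use Corollary~\ref{cor:mutually_singular} to get an auxiliary factor allocation $T$, carve $\IR^d$ by sublevel sets of $G(T(x)-x)$, run an intermediate-value argument to find a deterministic threshold $t_0$, and glue two applications of Corollary~\ref{cor:mutually_singular} — is exactly the paper's strategy and is correct in outline. But several execution details would not go through as written. Most importantly, $T$ is only defined on $\supp(\xi-\eta)_+$, so your $S_t=\{x: G(T(x)-x)\le t\}$ never partitions all of $\IR^d$: for instance $S_t^c$ always contains $\supp(\eta-\xi)_+$, so $\IE[\eins_{S_t^c}\eta(\Lambda_1)]$ does not tend to $0$ as $t\to+\infty$. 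The paper's fix is the map $F$ defined on all of $\IR^d$ ($F=T$ on $\supp(\xi-\eta)_+$, $F=T^{-1}$ on $\supp(\eta-\xi)_+$, $F=\mathrm{id}$ elsewhere; the inverse allocation exists because $\eta$ also does not charge small sets). Relatedly, the formula you wrote for $\phi$ contradicts your own claimed limits: $\IE[\eins_{S_t}\xi(\Lambda_1)]-\IE[\eins_{S_t}\eta(\Lambda_1)]$ tends to $0$ at both ends. What you actually need, and what Corollary~\ref{cor:mutually_singular} requires for the mutually singular pair $(\eins_{S_{t_0}}\xi,\eins_{S_{t_0}^c}\eta)$, is equality of $I(t)$ and $J(t)$ with $I(t)=\IE[\eins_{G(F-id)\le t}\eins_{F\ne id}\xi(\Lambda_1)]$ and $J(t)=\IE[\eins_{G(F-id)> t}\eins_{F\ne id}\eta(\Lambda_1)]$; equal intensity of $\eins_{S_{t_0}}\xi$ and $\eins_{S_{t_0}}\eta$ is a different condition.

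The continuity step is also not justified the way you describe. The factor $\eins_{F(x)-x\ne 0}$ is not cosmetic: the set $\{F=\mathrm{id}\}$ is a full-dimensional set, not a $(d-1)$-rectifiable one, so the no-small-sets hypothesis gives no control over the jump of size $\IE[\eins_{\{F=id\}}\xi(\Lambda_1)]$ at $t=G(0)$. The paper kills it by excluding that set and then handles $\{F=\mathrm{id}\}$ with a separate identity piece $S_3$ (legitimate because $\xi=\eta$ there), so the glued allocation has three pieces, not two. For the remaining $t\neq G(0)$, the sets $\{x:F(x)-x=G^{-1}(t)\}$ are also not hypersurfaces and need not be $(d-1)$-rectifiable; the paper uses that $T$ is locally optimal for a strictly concave cost (Theorem~5.5 in \cite{H16}) together with \cite[Proposition~5.1]{Sant15}, which forces $\{T(x)-x=v\}$ to be null for each fixed $v\ne 0$. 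For that reduction one needs $G:\IR^d\to\IR$ to be \emph{bijective}, so that $\{G(F(x)-x)=t\}$ is a single translation level set; your proposed choices $G(z)=|z|$ or a coordinate projection are not injective and would make the level set a sphere or hyperplane of translation vectors, breaking the optimality argument.
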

\begin{proof}
Let $T:\supp(\xi-\eta)_+\to \supp(\eta-\xi)_+$ be the factor allocation for the mutually singular measures, which exists by Corollary \ref{cor:mutually_singular}. Since both measures do not charge small sets, there exists also the inverse allocation $T^{-1}:\supp(\eta-\xi)_+\to \supp(\xi-\eta)_+$.
We define the (random) function $F$ on $\IR^d$ by 
    \[ F(x) = \begin{cases} 
          T(x) & x\in \supp(\xi-\eta)_+ \\
          x & \text{otherwise} \\
          T^{-1}(x) & x\in \supp(\eta-\xi)_+ 
       \end{cases}
    \]
Let $G:\IR^d\to \IR$ be a measurable and bijective function such that $G(0)=0$.    
Define the function $I:\IR\to \IR$ by
\[
I(t)=\IE\left[\int_{\Lambda_1}\eins_{G(F(x)-x)\leq t}\eins_{F(x)-x\neq 0}\xi(dx)\right].
\]
This function  satifies $\lim_{t\to -\infty}I(t)=0$ and $\lim_{t\to \infty}I(t)=\IE\left[\int_{\Lambda_1}\eins_{F(x)-x\neq 0}\xi(dx)\right]>0$. We prove that it is continuous. Since it is increasing in $t$, it suffices to prove that for fixed $t\in \IR$ \[
\IE\left[\int_{\Lambda_1}\eins_{G(F(x)-x)= t}\eins_{F(x)-x\neq 0}\xi(dx)\right]=0.
\]
This is true for $t=0$ so let $t\neq 0$. Then 
\begin{align}\label{eq:zero_expectations}
&\IE\left[\int_{\Lambda_1}\eins_{G(F(x)-x)= t}\eins_{F(x)-x\neq 0}\xi(dx)\right]
=\IE\left[\int_{\Lambda_1}\eins_{F(x)-x=G^{-1}(t)}\xi(dx)\right]\nonumber\\
&=\IE\left[\int_{\Lambda_1\cap\supp(\xi-\eta)_+}\eins_{T(x)-x=G^{-1}(t)}\xi(dx)\right]
+ \IE\left[\int_{\Lambda_1\cap\supp(\eta-\xi)_+}\eins_{T^{-1}(x)-x=G^{-1}(t)}\xi(dx)\right].
\end{align}
Now consider a fixed realisation of the measure $ \eins_{\Lambda_1\cap\supp(\xi-\eta)_+}\xi$ and of the corresponding pushforward $T_{\#}(\eins_{\Lambda_1\cap\supp(\xi-\eta)_+}\xi)$. From  \cite[Theorem 5.5]{H16} it follows that $T$ is  an optimal transport map for the measures 
$ \eins_{\Lambda_1\cap\supp(\xi-\eta)_+}\xi$ and $T_{\#}(\eins_{\Lambda_1\cap\supp(\xi-\eta)_+}\xi)$ w.r.t. the cost $c(x,y)=\vartheta(\abs{x-y})$.
Applying \cite[Proposition 5.1]{Sant15} thus yields that a.s. \[
\xi\left(\{x\in \Lambda_1\cap\supp(\eta-\xi)_+: T(x)-x=G^{-1}(t)\}\right)=0.
\]
Hence the first expectation is zero. Since we restrict to the set $\supp(\eta-\xi)_+$, we can bound the second expectation in \eqref{eq:zero_expectations} in the following way from above   \[
\IE\left[\int_{\Lambda_1\cap\supp(\eta-\xi)_+}\eins_{T^{-1}(x)-x=G^{-1}(t)}\xi(dx)\right]
\leq \IE\left[\int_{\Lambda_1\cap\supp(\eta-\xi)_+}\eins_{T^{-1}(x)-x=G^{-1}(t)}\eta(dx)\right].
\] 
By the same argument we used for the first expectation, it follows that the upper bound is equal to zero. Hence both terms in \eqref{eq:zero_expectations} are equal to zero and the continuity is proved.
 We define the corresponding function $J(t)$ by
\[
J(t)=\IE\left[\int_{\Lambda_1}\eins_{G(F(x)-x)> t}\eins_{F(x)-x\neq 0}\eta(dx)\right].
\]
This function is continuous as well and has the limits  
$\lim_{t\to -\infty}J(t)=\IE\left[\int_{\Lambda_1}\eins_{F(x)-x\neq 0}\eta(dx)\right]>0$ and $\lim_{t\to \infty}J(t)=0$. Hence there exists a $t_0$ such that $I(t_0)=J(t_0)$.

 This means that the random measures $ \eins_{G(F(x)-x)\leq t_0}\eins_{F(x)-x\neq 0}\xi$ and $  \eins_{G(F(x)-x)> t_0}\eins_{F(x)-x\neq 0}\eta$ have the same intensity. Since they are mutually singular, we can apply Corollary \ref{cor:mutually_singular} to obtain a factor allocation \[S_1:\supp( \eins_{G(F(x)-x)\leq t_0}\eins_{F(x)-x\neq 0}\xi)\to \supp(  \eins_{G(F(x)-x)> t_0}\eins_{F(x)-x\neq 0}\eta).
\]
Similiar arguments yield a factor allocation $S_2$ for the measures 
$$ \eins_{G(F(x)-x)> t_0}\eins_{F(x)-x\neq 0}\xi \text{ and }   \eins_{G(F(x)-x)\leq t_0}\eins_{F(x)-x\neq 0}\eta.$$
 Defining $S_3:\supp( \eins_{F(x)-x=0}\xi)\to \supp( \eins_{F(x)-x=0}\eta)$ to be the identity map, we see that $T=S_1+S_2+S_3$ is a factor allocation for the measures $\xi$ and $\eta$.
\end{proof}

\subsection{General case}\label{sec:general}
Combining Corollary \ref{cor:mutually_singular} and Proposition \ref{prop:no_small_sets} we prove the most general case. 
\begin{thm}
Let $\xi$ and $\eta$ be two jointly stationary and ergodic random measures with the same finite intensity. Assume that $\xi$ does not charge small sets. Then there exists a factor allocation.
\end{thm}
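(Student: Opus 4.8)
The plan is to deduce the statement from the two cases already settled --- Corollary \ref{cor:mutually_singular} (mutually singular measures, source not charging small sets) and Proposition \ref{prop:no_small_sets} (both measures not charging small sets) --- by splitting $\xi$ into two equivariant, spatially disjoint pieces, one matched to the ``$\xi$-singular'' part of $\eta$ and one matched to a part of $\eta$ that does not charge small sets. We may assume $\IP(\xi\neq\eta)>0$, since otherwise the identity is a balancing factor allocation; then $(\xi-\eta)_+$ and $(\eta-\xi)_+$ are non-trivial, a.s.\ mutually singular, of equal finite intensity $m:=\IE[(\xi-\eta)_+(\Lambda_1)]\in(0,\infty)$, and $(\xi-\eta)_+\le\xi$ does not charge small sets. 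First I would write the Lebesgue decomposition $\eta=\eta_c+\eta_s$ of $\eta$ with respect to $\xi$, i.e.\ $\eta_c\ll\xi$ and $\eta_s\perp\xi$; with $f:=\frac{d\xi}{d(\xi+\eta)}$ one has $\eta_c=\eins_{\{f>0\}}\eta$ and $\eta_s=\eins_{\{f=0\}}\eta$. Being canonical, this decomposition is a factor of $(\xi,\eta)$, and $\eta_c,\eta_s$ are jointly stationary, ergodic and of finite intensity; since $\eta_c\ll\xi$ it does not charge small sets either; and since $\eta_s$ agrees with $(\eta-\xi)_+$ on $\{f=0\}$ and vanishes elsewhere, $\eta_s\le(\eta-\xi)_+$, whence $m_s:=\IE[\eta_s(\Lambda_1)]\le m$. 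If $m_s=0$ then $\eta=\eta_c$ does not charge small sets and Proposition \ref{prop:no_small_sets} already applies, so assume $0<m_s\le m$.

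The heart of the argument is then to peel off from $\xi$ an equivariant piece of the special form $\eins_B\xi$ having intensity exactly $m_s$. To this end I would invoke Corollary \ref{cor:mutually_singular} for the pair $((\xi-\eta)_+,(\eta-\xi)_+)$ --- which is legitimate precisely because that corollary constrains only the source $(\xi-\eta)_+$, whereas $(\eta-\xi)_+$, being a part of $\eta$, is allowed to charge small sets --- obtaining a factor allocation $T$ balancing these two measures, which, as in the proof of Proposition \ref{prop:no_small_sets}, is an optimal transport map for $c(x,y)=\vartheta(|x-y|)$ with $\vartheta$ as in Lemma \ref{lem:cost_function}. Fixing a measurable bijection $G:\IR^d\to\IR$ with $G(0)=0$ and setting
\[
I(t):=\IE\Bigl[\int_{\Lambda_1\cap\supp((\xi-\eta)_+)}\eins_{\{G(T(x)-x)\le t\}}\,\xi(dx)\Bigr],\qquad t\in\IR,
\]
one verifies, exactly as the continuity of the analogous function in the proof of Proposition \ref{prop:no_small_sets} (using that $T$ is optimal, so that the level sets $\{x:T(x)-x=v\}$ are $\xi$-null for every $v\neq0$, and that $\{x:T(x)=x\}$ is $\xi|_{\supp((\xi-\eta)_+)}$-null because $T$ pushes a measure concentrated on $\{f>\frac{d\eta}{d(\xi+\eta)}\}$ onto one concentrated on its complement), that $I$ is nondecreasing and continuous, with $I(-\infty)=0$ and $I(+\infty)=\IE[\xi(\Lambda_1\cap\supp((\xi-\eta)_+))]\ge m\ge m_s$. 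Hence there is $t_0$ with $I(t_0)=m_s$; put $B:=\{x\in\supp((\xi-\eta)_+):G(T(x)-x)\le t_0\}$, which is an equivariant factor set since $T$ is equivariant and $\supp((\xi-\eta)_+)$ is, and set $\xi_s:=\eins_B\xi$ and $\xi_c:=\xi-\xi_s$, so that $\IE[\xi_s(\Lambda_1)]=m_s=\IE[\eta_s(\Lambda_1)]$ and $\IE[\xi_c(\Lambda_1)]=\IE[\xi(\Lambda_1)]-m_s=\IE[\eta_c(\Lambda_1)]$.

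To conclude I would apply Corollary \ref{cor:mutually_singular} to $(\xi_s,\eta_s)$ --- jointly stationary, ergodic, equal finite intensity, $\xi_s\le\xi$ not charging small sets, and $\xi_s\le\xi\perp\eta_s$ hence mutually singular --- getting a factor allocation $S_s$ with $(S_s)_\#\xi_s=\eta_s$, and Proposition \ref{prop:no_small_sets} to $(\xi_c,\eta_c)$ --- jointly stationary, ergodic, equal finite intensity, neither charging small sets --- getting a factor allocation $S_c$ with $(S_c)_\#\xi_c=\eta_c$. Since $B$ is measurable and equivariant, $T^\star:=S_s$ on $B$ and $T^\star:=S_c$ on $\IR^d\setminus B$ is a well-defined factor allocation, and for $\IP$-a.e.\ $\omega$ one has $(T^\star)^\omega_\#\xi^\omega=(S_s)^\omega_\#\xi_s^\omega+(S_c)^\omega_\#\xi_c^\omega=\eta_s^\omega+\eta_c^\omega=\eta^\omega$, so $T^\star$ balances $\xi$ and $\eta$. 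The step I expect to be the main obstacle is the middle one: the splitting $\xi=\xi_s+\xi_c$ must simultaneously be a factor, be of the ``spatial'' form $\eins_B\xi$ (so that no point receives both an $S_s$- and an $S_c$-instruction --- exactly the conflict the whole reduction is designed to avoid), and carry the prescribed intensity $m_s$; the continuity of $I$ that makes the intermediate-value choice of $t_0$ possible rests squarely on the optimal-transport structure of the auxiliary allocation $T$, precisely as in the proof of Proposition \ref{prop:no_small_sets}.
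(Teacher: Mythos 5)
Your argument is correct and reaches the same structural endgame as the paper's proof (split $\xi$ into an equivariant piece $\eins_{B}\xi$ of intensity $\IE[\eta^{s}(\Lambda_1)]$ to pair with the $\xi$-singular part $\eta^{s}$ via Corollary~\ref{cor:mutually_singular}, and pair the remainder $\eins_{B^{c}}\xi$ with $\eta^{a}\ll\xi$ via Proposition~\ref{prop:no_small_sets}), but the way you locate the set $B$ is genuinely different. The paper applies Theorem~\ref{thm:semicoupling} directly to the pair $(\xi,\eta^{s})$ to obtain an equivariant \emph{semi}coupling whose first marginal is $f\cdot\xi$, and then carves the level set out of $\{f>0\}$; the resulting continuity proof for $I(t)$ is a little delicate because the density $f$ can lie strictly between $0$ and $1$, which forces the $\frac{1}{f}\wedge N$ truncation step. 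You instead recycle the auxiliary allocation $T$ between $(\xi-\eta)_+$ and $(\eta-\xi)_+$ from Corollary~\ref{cor:mutually_singular} — the same device used in Proposition~\ref{prop:no_small_sets} — and carve $B$ out of $\supp(\xi-\eta)_+$ using $\{G(T(x)-x)\le t_0\}$. This works: $\xi\ge(\xi-\eta)_+$ on $\supp(\xi-\eta)_+$ gives $I(+\infty)\ge m\ge m_s$ for the intermediate value argument, $\eins_B\xi\le\xi\perp\eta^{s}$ gives mutual singularity, and the continuity of $I$ is a verbatim copy of the continuity proof in Proposition~\ref{prop:no_small_sets} (level sets of the $c$-cyclically monotone map $T$ are $\xi$-null, with no density to worry about). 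Both routes are valid and at the same level of rigor; yours avoids a second direct invocation of the semicoupling theorem and the $1/f$ truncation, at the modest cost of first having to argue $m_s\le m$ and $\IP(\xi\ne\eta)>0$, while the paper's keeps the carving set inside the region $\{f>0\}$ that the optimal semicoupling itself identifies as cheap to transport to $\eta^{s}$.
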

\begin{proof}
Via the Lebesgue decomposition theorem we can write $\eta=\eta^a+\eta^s$, where $\eta^a$ is absolutely continuous w.r.t. $\xi$. The measures  $\eta^s$ and $\xi$ are mutually singular.

By Lemma \ref{lem:cost_function} there exists a function $\vartheta$ and an equivariant coupling $\tilde{q}$ of $\xi$ and $\eta$ s.t. \[
\IE\left[\int_{\Lambda_1\times \IR^d}\vartheta(\abs{x-y})\tilde{q}(dx,dy)\right]<\infty.
\]
Note that restricting $\tilde{q}$ to the (random) set $\IR^d\times \supp(\eta^s)$ yields a semicoupling of $\xi$ and $\eta^s$. This semicoupling has finite mean transportation cost w.r.t.\ the function $\vartheta$, since the cost is bounded by the above expectation.
Since $\xi$ and $\eta^s$ are mutually singular, Theorem \ref{thm:semicoupling} yields an equivariant semicoupling $q$ of $\xi$ and $\eta^s$.

Denote by $\tilde{\xi}$ the first marginal of $q$ and by $f$ the density of $\tilde{\xi}$ w.r.t. $\xi$. Theorem \ref{thm:semicoupling} also yields a factor allocation $T$, which pushes $\tilde{\xi}$ onto $\eta^s$.
Note that the intensity of the random measure $\eins_{f>0}\xi$ is greater or equal than the intensity of the measure $\eins_{f>0}\tilde{\xi}=\tilde{\xi}$, which coincides with the intensity of $\eta^s$.
Similiar to the previous proof define $I(t)$ with the same deterministic function $G$ by 
\[
I(t)=\IE\left[\int_{\Lambda_1}\eins_{f(x)>0} \eins_{G(T(x)-x)\leq t}\xi(dx)\right].
\]
We see that $\lim_{t\to -\infty }I(t)=0$ and that $\lim_{t\to \infty }I(t)=\IE\left[\int_{\Lambda_1}\eins_{f(x)>0}\xi(dx)\right]
\geq \IE\left[\int_{\Lambda_1}d\eta^s\right]$.

Continuity of $I$ follows similiarly to the previous proof.  Let $t\in \IR$. Then 
\begin{align*}
&\IE\left[\int_{\Lambda_1}\eins_{f(x)>0} \eins_{G(T(x)-x)= t}\xi(dx)\right]=\IE\left[\int_{\Lambda_1}\frac{1}{f(x)} \eins_{G(T(x)-x)= t}\tilde{\xi}(dx)\right]\\
&=\lim_{N\to \infty}\IE\left[\int_{\Lambda_1}\left(\frac{1}{f(x)}\wedge N\right) \eins_{G(T(x)-x)= t}\tilde{\xi}(dx)\right]\leq \lim_{N\to \infty}N\IE\left[\int_{\Lambda_1} \eins_{G(T(x)-x)= t}\tilde{\xi}(dx)\right].
\end{align*}
Since by  \cite[Theorem 5.5]{H16} (local optimality, see sketch of proof of Theorem \ref{thm:semicoupling}) $T$ is an optimal map for the transport between the measures $\eins_{\Lambda_1}\eins_{G(T(x)-x)= t}\tilde{\xi}$ and $T_{\#}(\eins_{\Lambda_1}\eins_{G(T(x)-x)= t}\tilde{\xi})$, we can  apply again \cite[Proposition 5.1]{Sant15}.
Hence for every $N$ the integral inside the expectation in the last line is a.s.\ equal to $0$. This proves the continuity.

By the intermediate value theorem  there exists a $t_0\in \IR$ such that the measures $ \eins_{f>0}\eins_{G(T(x)-x)\leq t_0}\xi$ and $\eta^s$ have the same intensity. Define the random set 
\[
A=\{x\in \IR^d:f(x)>0\}\cap \{x\in \IR^d:G(T(x)-x)\leq t_0\}.
\]
Since the measures $ \eins_A\xi$ and $\eta^s$ have the same intensity and are mutually singular, there exists a factor allocation $S_1$ by Corollary \ref{cor:mutually_singular}. 

Note that the measures $ \eins_{A^c}\xi$ and $\eta^a$ do not charge small sets, because $\eta^a$ is absolutely continuous w.r.t. $\xi$. Hence, by Proposition \ref{prop:no_small_sets}, there exists a factor allocation $S_2$, which pushes  $ \eins_{A^c}\xi$ onto $\eta^a$. The map $T=S_1+S_2$ is the desired factor allocation.
\end{proof}

%%%%%%%%%%%%%%%%%%%%% 

 %%%%%%%%%%%%%%%%%%%%% 

% \begin{thebibliography}{blabla}
% \normalsize 
% \end{thebibliography}

\bibliographystyle{alpha} 
%\printbibliography
\bibliography{bibliography}
\end{document}